\definecolor{dGREEN}{rgb}{0.0,0.6,0.4}
\numberwithin{equation}{section}
\newtheorem{thm}{Theorem}[section]
\newtheorem{cor}[thm]{Corollary}
\newtheorem{lemma}[thm]{Lemma}
\newtheorem{prop}[thm]{Proposition}
\newtheorem{remark}[thm]{Remark}
\begin{document}
\title[No formation of a new phase]
{No formation of a new phase for a free boundary problem in combustion theory} 
\dedicatory{In memory of Professor Marek Fila, an esteemed colleague}

\author[K.~Furukawa]{Ken Furukawa}
\address[K.~Furukawa]{Faculty of Science, Academic Assembly, University of Toyama, 3190 Gofuku, Toyama-shi, Toyama 930-8555, Japan}
\email{furukawa@sci.u-toyama.ac.jp}

\author[Y.~Giga]{Yoshikazu Giga}
\address[Y.~Giga]{Graduate School of Mathematical Sciences, The University of Tokyo, 3-8-1 Komaba, Meguro-ku, Tokyo 153-8914, Japan.}
\email{labgiga@ms.u-tokyo.ac.jp.}

\author[N.~Kajiwara]{Naoto Kajiwara}
\address[N.~Kajiwara]{Applied Physics Course Department of Electrical, Electronic and Computer Engineering, Gifu University, 1-1 Yanagido, Gifu 501-1193, Japan.}
\email{kajiwara.naoto.p4@f.gifu-u.ac.jp}

%%%%%%%%%%%%%%

\keywords{non-existence, free boundary problem, self-similar solution, combustion theory, Laplace transform}

%%%%%%%%%%%%%%
\begin{abstract}
We consider a free boundary problem for the heat equation with a given non-negative external heat source, which is often called a parabolic Bernoulli free boundary problem.
 On the free boundary, we impose the zero Dirichlet condition and the fixed normal derivative so that heat  escapes from the boundary.
 In various settings, we show that there exist no solutions when the initial temperature equals the fixed temperature no matter where the initial location of the free boundary is given provided that the external heat source is bounded from above.
 We also note that there is a chance to have a solution when the external temperature is unbounded as time tends to zero by giving a self-similar solution.
\end{abstract}

\maketitle

%%%%%%%%%%%%%%
\section{Introduction} \label{SIn} % Section 1

We consider a free boundary problem which often arises in the combustion theory.
 It is of the form
\begin{alignat}{3}
	\partial_t u- \Delta u &= f \quad&&\text{in}
	\quad&& Q_T := \bigcup_{0<t<T} D(t) \times \{t\}, \label{EEq} \\
	u &= 0 \quad&&\text{on}
	\quad&& S_T := \bigcup_{0<t<T} S(t) \times \{t\}, \label{ED} \\
	\partial u/\partial\nu &= -\alpha \quad&&\text{on}
	\quad&& S_T, \label{EN}
\end{alignat}
where $\alpha>0$ is a given constant and $f$ is a given heat source;
 $D(t)$ is a domain in $\mathbb{R}^N$ with boundary $S(t)=\partial D(t)$ and $\partial/\partial\nu$ denotes the exterior normal derivative of $D(t)$ on $S(t)$.
 By a solution of \eqref{EEq}, \eqref{ED} and \eqref{EN}, we mean a classical non-negative solution in the sense that $u=u(x,t)$ ($\ge0$) is $C^2$ in $x$ and $C^1$ in $t$ in $Q_T$ and $u$ together with its spatial gradient $\nabla u$ is continuous up to $S(t)$, i.e., $\nabla^2u$, $\partial_tu\in C(Q_T)$ and $u,\nabla u\in C(Q_T\cup S_T)$; for $D(t)$, we assume that $S_T$ is uniformly $C^{2,1}$ ($C^2$ in space, $C^1$ in time) in $\mathbb{R}^N\times(\varepsilon,T)$ for any $\varepsilon>0$.
 We are interested in its initial value problem with initial data
\begin{equation} \label{EInt}
	\left. u \right|_{t=0} = u_0, \quad
	\left. S(t) \right|_{t=0} = S_0 \ \text{(}=\partial D_0\text{)},
\end{equation}
where $D_0$ is a given domain in $\mathbb{R}^N$.
 By the strong maximum principle \cite{PW}, we notice that $u>0$ in $Q_T$. 
 The unknowns are $u$ and the free boundary $S(t)$.
 In combustion theory, we usually take $f\equiv0$ but the initial data $u_0$ is taken positive in $D_0$ and $u_0=0$ on $\partial D_0$.
 By a solution to the initial value problem \eqref{EEq}, \eqref{ED}, \eqref{EN} with \eqref{EInt}, we mean that $\left(u,S(t)\right)$ is a solution to \eqref{EEq}, \eqref{ED} and \eqref{EN} such that $u$ is bounded and continuous up to $t=0$ on $\overline{D_0}$ and that $S(t)\to S_0$ as $t\downarrow0$ in the uniform $C^2$ topology.
 Note that we do not assume any control of time derivative of $S(t)$ up to $t=0$.
 For example, in one-dimensional setting, the derivative $\dot{s}(t)$ of $s$ may not be integrable in any small interval $[0,\varepsilon)$ if $S(t)$ is written as $x=s(t)$.
 In particular, $S_T$ is allowed to be tangential to the plane $t=0$.
 We are particularly interested in the case $u_0\equiv0$.
 This corresponds to the problem whether or not a new phase (unburnt area) is created.
 Our answer is negative.
 Let us state one-dimensional case.

\begin{thm} \label{T1D}
Assume that $N=1$ and that $f$ is non-negative bounded and measurable on $\mathbb{R}^N\times(0,T)$. 
 If $D_0=(s_0,\infty)$ and $u_0\equiv0$, there exist no solutions to \eqref{EEq}, \eqref{ED}, \eqref{EN} with \eqref{EInt}.
\end{thm}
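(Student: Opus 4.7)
The idea is that with $u_0\equiv 0$ and a bounded source $f$, the maximum principle forces $u$ to grow at most linearly in~$t$, while the boundary conditions force a linear wedge $u\approx\alpha(x-s(t))$ near the free boundary; these two behaviours are incompatible for small~$t$. I implement this in three steps, after observing that since $D(t)=(s(t),\infty)$ has exterior unit normal $-1$ at $s(t)$, condition \eqref{EN} reads $u_x(s(t),t)=\alpha$.

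Set $M:=\sup f\ge 0$. First, compare $u$ with $v(x,t):=Mt$ on $Q_T$: one has $v\ge u$ on the parabolic boundary (from $u_0\equiv 0$ and $u|_{S_T}=0$) and $v_t-v_{xx}=M\ge f$, so the maximum principle yields $0\le u(x,t)\le Mt$ on $Q_T$. Second, test \eqref{EEq} against a nonnegative cutoff $\phi\in C_c^\infty(\R)$, integrate over $(s(\tau),\infty)$ and integrate by parts twice: the boundary terms at $+\infty$ vanish by compact support; the Neumann condition produces $-\alpha\phi(s(\tau))$ from $\int u_{xx}\phi\,dx$, and the Dirichlet condition kills the analogous contribution in $\int u_x\phi'\,dx$. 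Integrating in $\tau\in(0,t)$, using $u_0\equiv 0$ and the Leibniz rule (the moving-boundary term vanishes because $u(s(\tau),\tau)=0$), one reaches
\begin{equation*}
\int_{s(t)}^{\infty} u(x,t)\phi(x)\,dx + \alpha\int_{0}^{t}\phi(s(\tau))\,d\tau = \int_{0}^{t}\!\!\int_{s(\tau)}^{\infty}\bigl(u\,\phi''+f\,\phi\bigr)\,dx\,d\tau.
\end{equation*}

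Third, specialize to a rescaled bump $\phi=\phi_\varepsilon(x):=\phi_1((x-s_0)/\varepsilon)$, where $\phi_1\in C_c^\infty(\R)$ is a fixed nonnegative function with $\phi_1(0)=1$, so $\|\phi_\varepsilon\|_{L^1}=\varepsilon\|\phi_1\|_{L^1}$ and $\|\phi_\varepsilon''\|_{L^1}=\varepsilon^{-1}\|\phi_1''\|_{L^1}$. Bounding the right-hand side with $u\le M\tau$ and $0\le f\le M$, and the left-hand side below by $\alpha t/2$ (since $\phi_\varepsilon(s(\tau))\ge 1/2$ on $(0,t)$ once $t$ is small by uniform convergence $s(\tau)\to s_0$), division by $t$ gives
\begin{equation*}
\frac{\alpha}{2}\le\frac{Mt\,\|\phi_1''\|_{L^1}}{2\varepsilon}+M\varepsilon\,\|\phi_1\|_{L^1}.
\end{equation*}
First fix $\varepsilon$ so small that $M\varepsilon\|\phi_1\|_{L^1}<\alpha/8$; then let $t\downarrow 0$ with this $\varepsilon$ fixed, driving the other term below $\alpha/8$ and yielding the contradiction $\alpha/2<\alpha/4$.

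\emph{Main obstacle.} The most delicate step is the maximum-principle comparison $u\le Mt$ on the unbounded cylinder $Q_T$, which requires a growth restriction on $u$ at spatial infinity. This should be inherent in the paper's notion of classical solution (``$u$ bounded and continuous up to $t=0$ on $\overline{D_0}$''), but a cleanest rigorous implementation would localize by multiplying by a cutoff whose spatial support is fixed and using a local bound on $u$; this also makes the integrations by parts (particularly at $+\infty$) automatic. Everything else is bookkeeping.
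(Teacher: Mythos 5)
Your proof is correct and takes a genuinely different route from the paper. The paper straightens the boundary via $y=x-s(t)$ and takes a Laplace transform in $x$: the transform $\hat U(\lambda,t)$ solves a first-order linear ODE in $t$ whose forcing $\hat g(\lambda,\cdot)-\alpha$ is strictly negative once $\lambda\ge 2\|f\|_\infty/\alpha$, so $\hat U(\lambda,t)<0$, contradicting $u\ge 0$. Your argument is instead local in $x$: you pair the equation with a compactly supported bump $\phi_\varepsilon$ centred at $s_0$, two integrations by parts isolate the boundary flux $\alpha\int_0^t\phi_\varepsilon(s(\tau))\,d\tau$ (the Leibniz moving-boundary term vanishes because $u(s(\tau),\tau)=0$, so no integrability of $\dot s$ is required, consistent with the paper's hypotheses), and you play the comparison $0\le u\le Mt$ against the scaling $\|\phi_\varepsilon''\|_{L^1}=\varepsilon^{-1}\|\phi_1''\|_{L^1}$; the extra factor of $t$ coming from the comparison is exactly what lets you fix $\varepsilon$ first and then send $t\downarrow 0$. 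Each approach buys something: the Laplace transform ports cleanly to the half-space geometry of Theorem \ref{TMul}, where $\hat U$ obeys a parabolic equation in $(x',t)$ handled by a maximum principle, and it tolerates exponential growth of $u$ and $f$ (Remark \ref{RGr}); your test-function argument is more elementary, uses no transform, and extends directly to Corollary \ref{C1D} by centering the bump at $s_{0-}$ with support away from $s_{0+}$ (bypassing the zero-extension subsolution device), but it leans on the assumed boundedness of $u$ for the step $u\le Mt$. The technical points you flag --- the comparison principle on the unbounded non-cylindrical $Q_T$ and the integrations by parts near the free boundary --- are indeed routine given the paper's regularity hypotheses ($u,\nabla u\in C(Q_T\cup S_T)$, $u$ bounded, $S_T$ uniformly $C^{2,1}$ away from $t=0$) and hide no gap.
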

\begin{cor} \label{C1D}
Under the same hypotheses of Theorem \ref{T1D} concerning $N$ and $f$, if $D_0=(s_{0-},s_{0+})$ with $-\infty<s_{0-}<s_{0+}<\infty$ and $u_0\equiv0$, there exist no solutions to \eqref{EEq}, \eqref{ED}, \eqref{EN} with \eqref{EInt}.
\end{cor}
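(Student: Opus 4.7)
My plan is to deduce Corollary \ref{C1D} from Theorem \ref{T1D} via an extension-by-zero argument. Assume, toward a contradiction, that $(u,(s_-(t),s_+(t)))$ is a solution to \eqref{EEq}--\eqref{EN} satisfying \eqref{EInt} with $D_0=(s_{0-},s_{0+})$ and $u_0\equiv 0$. On the half-line domain $\widetilde D(t):=(s_-(t),\infty)$, I define
\[
\bar u(x,t):=\begin{cases}u(x,t), & s_-(t)<x<s_+(t),\\ 0, & x\ge s_+(t).\end{cases}
\]
Because $u(s_+(t),t)=0$ and $u_0\equiv 0$, this $\bar u$ is continuous and nonnegative, vanishes on $\{x=s_-(t)\}$ and at $t=0$ on $(s_{0-},\infty)$, and classically inherits the free-boundary datum $\partial_x\bar u(s_-(t)^+,t)=\alpha$ from $u$.

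Away from $\{x=s_+(t)\}$, $\bar u$ solves $\partial_t\bar u-\partial_{xx}\bar u=\bar f$ with $\bar f:=f\chi_{(s_-(t),s_+(t))}$, which is nonnegative, bounded, and measurable on $\mathbb{R}\times(0,T)$. At $s_+(t)$ the spatial derivative $\partial_x\bar u$ jumps upward by $\alpha$ (from $-\alpha$ just inside, using $\partial u/\partial\nu=-\alpha$ there, to $0$ outside), so distributionally $\partial_t\bar u-\partial_{xx}\bar u\le\bar f$. Hence $\bar u$ is a weak subsolution on $\widetilde D(t)$ of the half-line free boundary problem with source $\bar f$, zero initial data, and the correct Dirichlet and Neumann data at $s_-(t)$. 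Applying Theorem \ref{T1D} to this reduced setup---whose hypotheses on $N$, $f$, $D_0$, and $u_0$ now match a half-line scenario---yields the desired contradiction.

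\textbf{Main obstacle.} The crucial point is that $\bar u$ is only a weak subsolution on $\widetilde D(t)$, due to the corner at $s_+(t)$, rather than a classical solution as postulated by Theorem \ref{T1D}. I would resolve this in one of two ways: (i) verify that the proof of Theorem \ref{T1D} extends to subsolutions---this is plausible since the non-existence argument is presumably driven by the inequalities $\partial_t u-\partial_{xx}u\le\|f\|_\infty$ and $\partial_x u(s_-(t)^+,t)\ge\alpha$, both of which $\bar u$ satisfies---or (ii) localize the argument to a strip $(s_-(t),s_-(t)+\rho)$ strictly inside $(s_-(t),s_+(t))$ for $t$ small, where $\bar u\equiv u$ is a genuine classical solution, absorbing the artificial right boundary via a bounded spatial cutoff. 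The second route exploits the local character of the proof of Theorem \ref{T1D} near the free boundary. A secondary, already-handled concern is the possible blow-up of $\dot s_-(t)$ as $t\downarrow 0$, which Theorem \ref{T1D} already accommodates and which is inherited by $\bar u$ at no additional cost.
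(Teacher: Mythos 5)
Your proposal is essentially the paper's proof: extend $u$ by zero past $s_+(t)$, recognize the resulting $\bar u$ as a subsolution of the half-line problem, and rerun the Laplace-transform argument. Your option (i) is exactly the paper's route, and in fact you have already done the hardest part by computing that $\partial_x\bar u$ jumps by $+\alpha$ at $s_+(t)$, so that distributionally
\[
\partial_t\bar u-\partial_x^2\bar u=\bar f-\alpha\,\delta\bigl(x-s_+(t)\bigr).
\]
The one remaining step you leave unsaid is why this inequality survives the Laplace transform. After shifting $x\mapsto x+s_-(t)$, the delta sits at $a(t):=s_+(t)-s_-(t)>0$, and since $e^{-\lambda x}>0$, its transform contributes $-\alpha e^{-\lambda a(t)}<0$. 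Consequently the ODE \eqref{ELap2} becomes
\[
\partial_t\hat U+p\,\hat U=\hat{\bar g}-\alpha-\alpha e^{-\lambda a(t)}\le\hat{\bar g}-\alpha,
\]
and integration gives $\hat U(\lambda,t)\le\int_0^t e^{-\int_\sigma^t p\,d\tau}(\hat{\bar g}-\alpha)\,d\sigma$, after which the large-$\lambda$ argument of Theorem \ref{T1D} applies verbatim and produces $\hat U<0$, the desired contradiction. In other words, you do not re-invoke Theorem \ref{T1D} as a black box (which its classical-solution hypothesis forbids); you reuse its proof, noting that the extra distributional term only reinforces the sign. By contrast, your option (ii) (localizing to a strip near $s_-(t)$ with a spatial cutoff) does not sit well with the method: the Laplace transform in $x$ is an inherently global operation, and a cutoff would introduce commutator terms whose signs are not controlled, so this route would require a genuinely different argument.
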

The proofs are elementary.
 In the case of Theorem \ref{T1D}, $D(t)$ is a half interval, i.e., $D(t)=\left(s(t),\infty\right)$.
 As usual, we change the independent variable by $y=x-s(t)$ and transform the problem \eqref{EEq}, \eqref{ED}, \eqref{EN} in a fixed domain.
 Rewriting $y$ by $x$, its explicit form is
\begin{alignat}{2}
	\partial_t U - \partial_x^2 U - \dot{s} \partial_x U &= f\left(x+s(t),t\right) \label{ETr} \\
	U(0,t) &= 0 \label{EDTr} \\
	\partial_x U(0,t) &= \alpha \label{ENTr}
\end{alignat}
for $U(x,t)=u\left(x+s(t),t\right)$.
 We take the Laplace transform of $U$ in $x$ variable and denote it by $\hat{U}(\lambda,t)$ for $\lambda>0$.
 This $\hat{U}$ solves a linear first order ordinary different equation (ODE) in time and one can calculate 
\begin{equation} \label{EFo} 
	\hat{U}(\lambda,t) = \int_0^t \exp
	\left( -\int_\sigma^t p(\lambda,\tau)\, d\tau \right)
	\left(\hat{g}(\lambda,\sigma) - \alpha \right) d\sigma
\end{equation}
with $p(\lambda,\tau)=-\lambda^2-\dot{s}(\tau)\lambda$, $g(x,t)=f\left(x+s(t),t\right)$.
 Our assumption on $f$ implies that
\[
	\sup_{0<\sigma<T} \hat{g}(\lambda,\sigma) \to 0
	\quad\text{as}\quad \lambda \to \infty.
\]
Thus, for sufficiently large $\lambda$, $\hat{U}(\lambda,t)<0$ which contradicts the non-negativity of $U$.
 We shall give details in Section \ref{SOne}.
\begin{remark} \label{RGr}
From the proof, it is rather clear that the conclusion of Theorem \ref{T1D} is still valid when
\[
	\sup_{x\in\mathbb{R}} f(x,t) e^{-\lambda_0 x}
\]
is bounded from above on $(0,T)$ with some $\lambda_0>0$.
 In other words, exponential growth is allowed.
 Also, $u$ is allowed to be unbounded provided that
\[
	\sup_{x\in\mathbb{R}} u(x,t) e^{-\lambda_0 x}
\]
is bounded from above on $(0,T)$ with some $\lambda_0>0$.
\end{remark}

The proof of Corollary \ref{C1D} is similar once we observe that
\[
	U(x,t) = u \left( x + s_-(t), t \right)
\]
is a subsolution of \eqref{ETr}, \eqref{EDTr}, \eqref{ENTr} for $x>s_-(t)$ when $D(t)=\left(s_-(t), s_+(t)\right)$ by extending $u$ by zero outside $D(t)$.
 We shall give its proof in Section \ref{SOne}.

This idea can be generalized to multi-dimensional setting for special choice of $D_0$.
\begin{thm} \label{TMul}
Assume that $f$ is non-negative, bounded and measurable on $\mathbb{R}^N\times(0,T)$.
 For a unit vector $m\in\mathbb{R}^N$, let $P_m$ be the orthogonal projection of $\mathbb{R}^N$ defined by $P_m x=x-(x\cdot m)m$ for $x\in\mathbb{R}^N$.
 Identify $P_m\mathbb{R}^N$ by $\mathbb{R}^{N-1}$ and $P_m x$ by $x'\in\mathbb{R}^{N-1}$.
 Assume that $s_0:\mathbb{R}^{N-1}\to\mathbb{R}$ is a $C^2$ function with bounded derivative up to second order.
 If $D_0=\left\{ x\in\mathbb{R}^N\bigm|s_0(x')<x\cdot m\right\}$ and $u_0\equiv0$, there exist no solutions to \eqref{EEq}, \eqref{ED}, \eqref{EN} with \eqref{EInt}.
 Here $x'=x-(x\cdot m)m$.
\end{thm}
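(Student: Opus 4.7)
I would follow the same strategy as in the one-dimensional case: straighten the free boundary, apply the Laplace transform in the direction normal to the reference hyperplane, and exploit an integrating factor that reduces the transformed equation to the classical heat equation.

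Because $S(t)\to S_0$ in the uniform $C^2$ topology and $S_0$ is the graph of $s_0$, for small $t>0$ the free boundary $S(t)$ can be written as the graph $\{x\in\mathbb{R}^N\mid x\cdot m=s(x',t)\}$ with $s(x',0)=s_0(x')$ and with $\nabla_{x'}s$, $\nabla_{x'}^2s$ uniformly bounded. Straightening via $y=x\cdot m-s(x',t)$ and $U(x',y,t)=u(x'+(y+s(x',t))m,t)$ yields a parabolic equation on $\mathbb{R}^{N-1}\times(0,\infty)\times(0,T)$ whose coefficients involve $\nabla_{x'}s$, $\Delta_{x'}s$ and $\dot s$, together with $U(x',0,t)=0$ and $\partial_yU(x',0,t)=\alpha/\sqrt{1+|\nabla_{x'}s|^2}$ (the latter coming from the vector identity $\nabla_{x'}u|_{y=0}=-\partial_yU|_{y=0}\,\nabla_{x'}s$). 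Applying the Laplace transform $\hat U(x',\lambda,t)=\int_0^\infty e^{-\lambda y}U(x',y,t)\,dy$ in the normal variable produces a linear parabolic equation in $(x',t)$ of the form
\[
\partial_t\hat U-\Delta_{x'}\hat U+2\lambda\nabla_{x'}s\cdot\nabla_{x'}\hat U-M(x',t;\lambda)\hat U=-\alpha\sqrt{1+|\nabla_{x'}s|^2}+\hat g,
\]
with $M=(1+|\nabla_{x'}s|^2)\lambda^2+(\dot s-\Delta_{x'}s)\lambda$, zero initial data (because $u_0\equiv 0$), and $\hat g(x',\lambda,t)\to 0$ uniformly in $(x',t)$ as $\lambda\to\infty$ by the boundedness of $f$.

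The central algebraic step is the substitution $\hat U=e^{\lambda s(x',t)+\lambda^2 t}Z$, which is the natural multidimensional counterpart of the integrating factor $\exp(\lambda^2(t-\sigma)+\lambda(s(t)-s(\sigma)))$ implicit in the one-dimensional formula \eqref{EFo}. A direct computation shows that the first-order drift $2\lambda\nabla_{x'}s\cdot\nabla_{x'}\hat U$, the term $\lambda^2|\nabla_{x'}s|^2$ produced by $\Delta_{x'}e^{\lambda s}$, and the zeroth-order coefficient $M$ conspire to cancel exactly, reducing the system to the plain heat equation
\[
\partial_tZ-\Delta_{x'}Z=e^{-\lambda^2 t-\lambda s(x',t)}\left(-\alpha\sqrt{1+|\nabla_{x'}s|^2}+\hat g(x',\lambda,t)\right),\qquad Z(x',\lambda,0)=0.
\]
For $\lambda$ sufficiently large the right-hand side is strictly negative, since $\hat g\to 0$ uniformly while $\alpha\sqrt{1+|\nabla_{x'}s|^2}\ge\alpha>0$. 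The maximum principle for the heat equation on $\mathbb{R}^{N-1}$ then forces $Z<0$, hence $\hat U<0$ for every $t>0$, in contradiction with $\hat U\ge 0$.

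The main obstacle I anticipate is a clean application of the heat-equation maximum principle on the unbounded base $\mathbb{R}^{N-1}$: the factor $e^{-\lambda s(x',t)}$ in the source can grow with $|x'|$, but the uniform bound on $\nabla_{x'}s$ forces $s$ to grow at most linearly, so $Z$ grows at most like a single exponential in $|x'|$, which is comfortably inside the Tychonoff uniqueness class $O(e^{c|x'|^2})$; this is enough to run the standard comparison argument. Exactly as in the one-dimensional proof, the possible non-integrability of $\dot s$ near $t=0$ is harmless, because the integrating factor depends on the continuous quantity $s(x',t)$ itself rather than on its time derivative.
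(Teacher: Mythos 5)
Your proposal is correct and follows the same overall strategy as the paper: straighten the free boundary to a half-space by the graph parametrization $y=x\cdot m-s(x',t)$, take the Laplace transform in the normal variable, note that the forcing term becomes strictly negative for $\lambda$ large because $\hat g\le\|f\|_\infty/\lambda$ while the Neumann datum contributes $-\alpha\sqrt{1+|\nabla_{x'}s|^2}\le-\alpha$, and then invoke a parabolic maximum principle on $\mathbb{R}^{N-1}$ to force $\hat U\le0$, contradicting $\hat U\ge0$. Your computation of the transformed boundary datum $\partial_yU(x',0,t)=\alpha/\sqrt{1+|\nabla_{x'}s|^2}$, of the extra constant term $\alpha|\nabla_{x'}s|^2/\sqrt{1+|\nabla_{x'}s|^2}$ produced by the Laplace transform of the tangential second-order operator, and of $M$ all agree with the paper's \eqref{EMLap}.

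Where you deviate — and it is a genuine improvement in clarity — is in how the maximum principle step is executed. The paper proves a rather general weak maximum principle (Lemma~\ref{LMax}) for parabolic operators on $\mathbb{R}^{N-1}$ with a zeroth-order coefficient split $c=c_1+c_2$, where the possibly non-integrable part $c_1=\dot s\lambda$ is handled via the hypothesis that the improper integral $C_1=\int_{0\downarrow}^t c_1\,d\tau=\lambda\bigl(s(t,x')-s_0(x')\bigr)$ is bounded with bounded derivatives; inside that proof the integrating factor $q=e^{-C_1}$ removes $c_1$, a further time-dependent factor pushes the remaining zeroth-order coefficient below $-1$, and a $\rho$-weight enforces decay at spatial infinity. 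You instead observe that the single substitution $\hat U=e^{\lambda s(x',t)+\lambda^2 t}Z$ cancels the drift $2\lambda\nabla_{x'}s\cdot\nabla_{x'}$ \emph{and} all zeroth-order terms exactly, leaving the plain heat equation $\partial_tZ-\Delta_{x'}Z=e^{-\lambda s-\lambda^2 t}\bigl(\hat g-\alpha\sqrt{1+|\nabla_{x'}s|^2}\bigr)$ with $Z(\cdot,0)=0$; the algebra does cancel as you claim, and since $|\nabla_{x'}s|$ is bounded, $Z$ and the source grow at most like $e^{c|x'|}$, so the Tychonoff-class comparison applies. Your remark that the integrating factor depends on the continuous quantity $s$, not on $\dot s$, is precisely the same resolution of the $t\downarrow0$ singularity that the paper implements abstractly via $C_1$. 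The paper's lemma is more general; your reduction is specific to the operator at hand but shorter and self-contained, avoiding the need to verify the hypotheses of a bespoke maximum principle.
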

\begin{cor} \label{CMul}
Assume the same hypotheses of Theorem \ref{TMul} concerning $f$ and $m$.
 Assume that $s_{0\pm}:\mathbb{R}^{N-1}\to\mathbb{R}$ is a $C^2$ function with bounded derivative up to second order with $s_{0-}<s_{0+}$ on $\mathbb{R}^{N-1}$.
 Then there are no solutions to \eqref{EEq}, \eqref{ED}, \eqref{EN} with \eqref{EInt} if
\[
	D_0=\left\{ x\in\mathbb{R}^N\bigm|s_{0-}(x')<x\cdot m<s_{0+}(x') \right\}
	\quad\text{and}\quad u_0 \equiv 0.
\]
\end{cor}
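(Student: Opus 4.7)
The plan is to reduce Corollary~\ref{CMul} to Theorem~\ref{TMul} by a zero-extension argument, in complete analogy with the reduction of Corollary~\ref{C1D} to Theorem~\ref{T1D} sketched above. Suppose for contradiction that $(u,S(t))$ is a solution with
\[
D(t)=\{x\in\R^N\mid s_-(x',t)<x\cdot m<s_+(x',t)\}.
\]
I would extend $u$ by zero to the half-space-like domain $\widetilde{D}(t)=\{x\in\R^N\mid x\cdot m>s_-(x',t)\}$, producing a continuous, nonnegative function $\widetilde{u}$ that retains the Dirichlet--Neumann pair $\widetilde{u}=0$, $\partial\widetilde{u}/\partial\nu=-\alpha$ on the lower graph of $s_-$, while the original boundary conditions on the upper graph become an interior feature of $\widetilde{D}(t)$.

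The first step is to verify that $\widetilde{u}$ is a distributional subsolution of $\partial_t\widetilde{u}-\Delta\widetilde{u}\le f$ in $\widetilde{D}(t)$. Across the interior interface $\Sigma_+(t)=\{x\cdot m=s_+(x',t)\}$ the normal derivative of $\widetilde{u}$ jumps upward by $\alpha$, from $-\alpha$ on the inside to $0$ on the outside, so $\Delta\widetilde{u}$ contains, in the distributional sense, a positive Dirac surface measure of density $\alpha$ on $\Sigma_+(t)$. Hence $\partial_t\widetilde{u}-\Delta\widetilde{u}$ equals $f$ minus this positive measure and is therefore $\le f$ distributionally.

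The second step is to run the multi-dimensional Laplace-transform argument underlying Theorem~\ref{TMul} on $\widetilde{u}$. After flattening the lower boundary via $y=x\cdot m-s_-(x',t)$ and transforming in $y$, one obtains for $\widetilde{u}$ the analogue of \eqref{EFo} with an additional negative contribution of the form $-\alpha e^{-\lambda(s_+-s_-)(x',\sigma)}$ inside the $\sigma$-integrand, coming from the interior Dirac on $\Sigma_+$. Since this extra term has the same sign as the dominant $-\alpha$, the integrand in the analogue of \eqref{EFo} remains strictly negative for all sufficiently large $\lambda$, forcing the Laplace transform $\hat{U}(\lambda,x',t)<0$ in contradiction with $\widetilde{u}\ge 0$.

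The principal technical obstacle is to justify rigorously the distributional subsolution property across the curved, time-dependent interior interface $\Sigma_+$, in a form compatible with the Laplace transform in $y$. I would handle this by testing $\partial_t\widetilde{u}-\Delta\widetilde{u}-f$ against nonnegative smooth functions of product form $e^{-\lambda y}\varphi(x')\psi(t)$, suitably truncated at infinity, and using the $C^{2,1}$ regularity of $s_\pm$ to justify the integration by parts; the resulting surface integral over $\Sigma_+$ then carries the correct negative sign. Once this computation is done, the remainder of the argument is a direct transcription of the proof of Theorem~\ref{TMul}.
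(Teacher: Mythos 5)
Your proposal is correct and matches the paper's approach: extend $u$ by zero past the upper graph, observe the extension is a distributional subsolution with an extra negative Dirac contribution at $\Sigma_+$, and conclude as in Theorem~\ref{TMul}. One small clarification: for the multi-dimensional step the paper does not use a Duhamel formula of the type suggested by your reference to \eqref{EFo} (this would require additional H\"older regularity of the data) but instead applies the maximum principle Lemma~\ref{LMax} directly to the parabolic equation \eqref{EMLap} for $\hat{U}$, and your extra $-\alpha e^{-\lambda(s_+-s_-)}$ term only strengthens the negativity of its right-hand side, so the contradiction follows the same way.
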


The proof of Theorem \ref{TMul} is similar to that of Theorem \ref{T1D}.
 We may assume that $m=(1,0,\ldots,0)$ by rotation and that
\[
	D(t) = \left\{ (x_1,x') \bigm| x_1 > s(t,x'),\ x'\in\mathbb{R}^{N-1} \right\}.
\]
We set $U(x,t)=u\left(x_1+s(t,x'),x'\right)$ and consider the Laplace transform $\hat{U}(\lambda,x',t)$ in $x_1$-variable.
 In one-dimensional case, $\hat{U}$ solves a linear ODE of the first order in time.
 In multi-dimensional setting, $\hat{U}$ solves a parabolic equation.
 Fortunately, for a sufficiently large $\lambda>0$ (uniformly in $x'$), we are able to prove that the inhomogeneous term of this parabolic equation is negative, so by the maximum principle, we conclude that $\hat{U}<0$, which yields a contradiction,
 The proof for Corollary \ref{CMul} parallels that of Corollary \ref{C1D} once we extend Theorem \ref{T1D} to Theorem \ref{TMul}.

It seems non-trivial to extend these results when $D_0$ is a bounded domain since we cannot take $\lambda$ large uniformly in $x'$-variable.
 It seems also non-trivial to extend our method based on Laplace transforms to radially symmetric case for $N\ge2$.

If $f$ is unbounded close to $t=0$, there may exist a solution for \eqref{EEq}, \eqref{ED}, \eqref{EN} with \eqref{EInt} for $D_0=(0,\infty)$.
 We give an explicit example when $f\sqrt{t}$ is a positive constant.
 In this case, we construct an explicit solution which is self-similar in the sense that
\[
	u(x,t) = \lambda^{-1} u(\lambda x,\lambda^2 t), \quad
	s(t) = \lambda^{-1} s(\lambda^2 t)
\]
for $\lambda>0$, $x>0$.
 This is not difficult by solving a Hermite type differential equation, which is a linear ODE of the second order.

From the view point of combustion theory, the complement of $D(t)$ is burnt zone and $\theta=\theta_c-u$ is a temperature in the unburnt zone, where $\theta_c$ is the temperature on the burning front.
 Our results say that it is impossible to create unburnt zone in already burnt zone by removing heat.

In the case of $f\equiv0$ with $u_0\ge0$, there is a large literature of existence and uniqueness of a solution.
 The paper \cite{GHV} includes the history of the development of the theory.
 The theory is divided into two settings.
 The one-dimensional case \cite{Ve}, \cite{H1} or radially symmetric case \cite{GHV}, \cite{H2} where uniqueness of the solution is guaranteed as well as global existence of a reasonable solution;
 in the latter case $D_0$ is taken either a ball or annulus for $n\ge3$.
 The construction of a solution in \cite{GHV} is based on what is called elliptic-parabolic approach (see \cite{H1} and \cite{H2}) by considering equation
\[
	\partial_t u_+ = \Delta u
\]
with $u_+=\max(u,0)$.
 For general higher dimension setting, a weak solution is constructed by Caffarelli and Vazquez \cite{CV}.
 The main idea in \cite{CV} is to approximate the problem by
\[
	\partial_t u^\varepsilon - \Delta u^\varepsilon = -\beta_\varepsilon (u^\varepsilon)
	\quad\text{for small}\quad \varepsilon > 0, 
\]
where $\beta_\varepsilon(s)=\beta(s/\varepsilon)/\varepsilon$ where $\beta\in C^\infty(\mathbb{R})$ satisfies
\begin{enumerate}
\item[(i)] $\beta>0$ on $(0,1)$ and $\beta=0$ outside $(0,1)$,
\item[(ii)] $\beta$ is increasing in $[0,1/2)$ while $\beta$ is decreasing on $(1/2,1]$,
\item[(iii)] $\int_0^1 \beta\,ds=1/2$.
\end{enumerate}
This is sometimes called Arrhenius approximations.
 For fine properties of the limit of  $u^\varepsilon$ and $\varepsilon\to0$ is studied by Caffarelli and V\'azquez \cite{CV}, Caffarelli, Lederman and Wolanski \cite{CLW1, CLW2} and Weiss \cite{W1, W2}.
 The main difference from one-dimensional or radially symmetric setting is that the solution may not be unique.
 An example is given in \cite{Va} where initial data is in the form of two circular humps.
 Its rigorous proof is given in \cite{PY}.
 There is also a viscosity approach developed by Kim \cite{K}.

Going back to one-dimensional setting, there are several works discussing stability of the traveling solution.
 This problem is studied in \cite{HH1, HH2}, where regularity of solution and interface is also discussed.
 In the setting of Theorem \ref{TMul}, the stability of traveling wave is proved (with $f\equiv0$) is proved in \cite{BHL} in multi-dimensional setting.
 There, the existence of a solution near the traveling wave has been established.

This paper is organized as follows.
 In Section \ref{SOne}, we give proofs of both Theorem \ref{T1D} and Corollary \ref{C1D}.
 In Section \ref{SMul}, we give proofs of Theorem \ref{TMul} and Corollary \ref{CMul}.
 In Section \ref{SSel}, we construct a self-similar solution when $f\sqrt{t}$ is a positive constant with $D_0=(0,\infty)$ and $u_0\equiv0$.
 
\section{One-dimensional problems} \label{SOne}

We shall give a proof of Theorem \ref{T1D} as well as that of Corollary \ref{C1D}.
 It is convenient to recall the Laplace transform $\hat{f}$ of a function $f$ defined on a semi-infinite interval $(0,\infty)$.
 Its explicit form is given as 
\[
	\hat{f}(\lambda) := \int_0^\infty e^{-\lambda x} f(x)\, dx.
\]
This is finite for $\lambda>0$ if $f$ is bounded and measurable on $(0,\infty)$, i.e., $f\in L^\infty(0,\infty)$.
 If, moreover $f$ is Lipschitz so that its distributional derivative $\partial_x f\in L^\infty(0,\infty)$, then integration by parts yields
\begin{align*}
	\widehat{\partial_x f} (\lambda) 
	&= \int_0^\infty e^{-\lambda x} (\partial_x f) (x)\, dx
	\equiv e^{-\lambda x} \left.f\right|_0^\infty 
	- \int_0^\infty \left( \partial_x(e^{-\lambda x}) \right) f(x)\, dx \\
	&= -f(0) + \lambda \hat{f}(\lambda).
\end{align*}
We repeat this argument and obtain the following well-known formulas.
\begin{prop} \label{PDL}
Assume that $\partial_x^j f\in L^\infty(0,\infty)$ for $0\leq j\leq k$ with $k\geq1$.
 Then
\[
	\widehat{\partial_x^k f} (\lambda) = \lambda^k \hat{f}(\lambda) - \sum_{j=1}^k \lambda^{k-j} (\partial_x^{j-1}f) (0).
\]
The formula still valid if $\partial_x^k f$ is a finite Radon measure instead of $L^\infty$ function.
\end{prop}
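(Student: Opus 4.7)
The plan is a straightforward induction on $k$. The case $k=1$ is already verified immediately before the statement: integration by parts gives
\[
	\widehat{\partial_x f}(\lambda) = \lambda \hat{f}(\lambda) - f(0),
\]
where the boundary contribution at $+\infty$ vanishes because $f\in L^\infty(0,\infty)$ and $e^{-\lambda x}\to 0$ exponentially for $\lambda>0$. This matches the asserted formula at $k=1$.

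For the inductive step, assume the formula holds at level $k$, and suppose $\partial_x^j f\in L^\infty(0,\infty)$ for $0\le j\le k+1$. Setting $g:=\partial_x^k f$, the hypothesis on $\partial_x^{k+1}f$ makes $g$ Lipschitz, so the $k=1$ case applies and yields $\widehat{\partial_x^{k+1}f}(\lambda)=\lambda\widehat{\partial_x^k f}(\lambda)-(\partial_x^k f)(0)$. Substituting the inductive hypothesis for $\widehat{\partial_x^k f}(\lambda)$ and re-indexing the sum collapses the expression into
\[
	\lambda^{k+1}\hat{f}(\lambda) - \sum_{j=1}^{k+1} \lambda^{k+1-j}(\partial_x^{j-1}f)(0),
\]
which is the formula at level $k+1$.

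To extend to the case where $\partial_x^k f$ is only a finite Radon measure $\mu$ on $[0,\infty)$, I would interpret $\widehat{\partial_x^k f}(\lambda)$ as $\int_{[0,\infty)}e^{-\lambda x}\,d\mu$ and argue by approximation: mollify $f$ by a smooth kernel supported in $(-\varepsilon,0)$ to produce $f_\varepsilon\in C^\infty([0,\infty))$ whose derivatives up to order $k-1$ are uniformly bounded and converge locally uniformly (with boundary values converging pointwise) to those of $f$, while $\partial_x^k f_\varepsilon$ converges weakly$^*$ to $\mu$ against continuous bounded test functions such as $x\mapsto e^{-\lambda x}$. Applying the already-proved smooth formula to $f_\varepsilon$ and passing to the limit $\varepsilon\downarrow 0$ using dominated convergence on the bulk integrals gives the claim.

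The main subtlety I expect is purely a bookkeeping one: deciding whether a possible atom of $\mu$ at $x=0$ is included in the Laplace integral or absorbed into the $j=k$ boundary term $(\partial_x^{k-1}f)(0)$, and choosing the side of support of the mollifier accordingly so that the approximation respects whichever convention is adopted. Beyond this, no analytic obstacle arises, since the formula itself is algebraic once integration by parts is justified.
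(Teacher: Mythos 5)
Your proof is correct and follows essentially the same route as the paper, which simply says ``we repeat this argument'' after exhibiting the $k=1$ integration by parts; your induction is just the formalization of that iteration. The mollification argument for the Radon-measure extension is a reasonable supplement (the paper gives none), and your caveat about a possible atom at $x=0$ is the right thing to flag, though in the paper's only use of this case (Corollary~\ref{C1D}) the singular part is a Dirac mass at an interior point $s_+(t)-s(t)>0$, so the convention issue does not actually arise.
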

\begin{proof}[Proof of Theorem \ref{T1D}]
% If such a solution $u$ exists, by the maximum principle, $u\geq0$ on $Q_T$.
Since we assume $u$ is bounded, $\partial_xu$ and $\partial_x^2u$ are bounded for $t\geq\delta$ for any $\delta>0$ by interior estimates;
 see e.g.\ \cite{F}, \cite{LSU}.
 Thus the derivatives $\partial_xU(x,t)$, $\partial_x^2U(x,t)$ is bounded on $[0,\infty)$ as a function of $x$ for each $t>0$, where $U$ is the transformed function.
 By \eqref{EDTr} and \eqref{ENTr}, Proposition \ref{PDL} yields
\begin{align*}
	\widehat{\partial_x^2 U} (\lambda,t) 
	&= \lambda^2 \hat{U}(\lambda,t) - \lambda U(0,t) - \partial_x U(0,t) \\
	&= \lambda^2 \hat{U}(\lambda,t) - \alpha, \\
	\widehat{\partial_x U} (\lambda,t) 
	&= \lambda \hat{U}(\lambda,t),
\end{align*}
where the Laplace transform is taken as a function of $x\in(0,\infty)$.
 Taking the Laplace transform of both side of \eqref{ETr}, we now obtain
\begin{equation} \label{ELap1}
	\widehat{\partial_t U} - \lambda^2 \hat{U} + \alpha
	- \dot{s} \lambda \hat{U} = \hat{g}
\end{equation}
with $g(x,t)=f\left(x+s(t),t\right)$.
 Since we assume that $u$ and $s$ are continuous up to $t=0$, $U(x,t)\to u_0(x)\equiv0$ locally uniformly on $[0,\infty)$.
 Since $U$ is bounded up to $t=0$, this implies that
\[
	\lim_{t\downarrow0} \hat{U} (\lambda,t) = 0
	\quad\text{for}\quad \lambda > 0.
\]
Indeed,
\begin{align*}
	\varlimsup_{t\downarrow0} \left|\hat{U}(\lambda,t)\right|	
	&\leq \varlimsup_{t\downarrow0} \int_0^L e^{-\lambda x} \left|U(x,t)\right| dx
	+ \int_L^\infty e^{-\lambda x} \lVert U \rVert_{L^\infty(\mathbb{R}_+^2)}\, dt \\
	&\leq \lVert U \rVert_\infty e^{-\lambda L}/\lambda \to 0
	\quad\text{as}\quad L \to \infty.
\end{align*}
Since $\partial_tU$ is bounded as a function of $x$ for $t>0$, we easily see that
\[
	\widehat{\partial_t U} = \partial_t \hat{U}.
\]
By \eqref{ELap1}, we now observe that
\begin{equation}
\begin{split} \label{ELap2}
	\left \{
	\begin{array}{rl}
	\partial_t \hat{U} + p(\lambda,t) \hat{U} &\hspace{-0.5em}= \hat{g}-\alpha,
	\quad p(\lambda,t) = -\lambda^2 - \dot{s} (t) \lambda \\
	\hat{U} (\lambda,0) &\hspace{-0.5em}= 0.
	\end{array}
	\right.
\end{split}
\end{equation}
This is a linear first order ordinary equation and it is easy to solve.
 Under the zero initial condition, the explicit form of the solution is
\[
	\hat{U}(\lambda,t)
	= \int_0^t \exp \left( -\int_\sigma^t p(\lambda,\tau)\, d\tau \right)
	\left(\hat{g}(\lambda,\sigma) - \alpha \right) d\sigma.
\]
Here, we only invoke that
\[
	s \in C[0,T) \cap C^1 \left((0,T)\right);
\]
we do not assume $\int_0^T|\dot{s}|\,d\tau<\infty$.
 If $f$ is bounded, then
\[
	\hat{g}(\lambda,\sigma)
	\leq \lVert f \rVert_\infty
	\int_0^\infty e^{-\lambda x}\, dx
	= \lVert f \rVert_\infty / \lambda.
\]
Thus
\[
	\hat{g}(\lambda,\sigma) - \alpha
	\leq -\alpha/2 \quad\text{for}\quad \sigma\in (0,T), \quad
	\lambda \geq 2\lVert f \rVert_\infty/\alpha.
\]
This implies that
\[
	\hat{U}(\lambda,t) < 0
\]
if $\lambda$ is large, say $\lambda\geq2\lVert f \rVert_\infty/\alpha$.
 This contradicts the non-negativity of $u$.
\end{proof}
We shall prove Corollary \ref{C1D} by observing that $\bar{u}$ is a subsolution in
\[
	\tilde{Q}_T = \bigcup_{0<t<T} \left( s_-(t),\infty \right) \times \{t\},
\]
where $\bar{u}$ denotes the zero extension of $u$ to $x>s_+(t)$.
\begin{proof}[Proof of Corollary \ref{C1D}]
We write
\[
	D(t) = \left( s_-(t), s_+(t) \right).
\]
 Let $\bar{f}$ denote the zero extension of $f$ for $x>s_+(t)$ for a function $f$ defined for $x>s(t):=s_-(t)$.
 Since $\bar{u}=0$ for $x>s_+(t)$ and $u_x=-\alpha$ at $s_+(t)$, we first note that
\[
	\partial_x^2 \bar{u} = \overline{\partial_x^2u}
	+ \alpha\delta \left(x-s_+(t) \right)
\]
for $x>s(t)$.
 For $\partial_x u$ and $\partial_t u$, we observe that $\partial_x\bar{u}=\overline{\partial_xu}$, $\partial_t\bar{u}=\overline{\partial_tu}$ for $x>s(t)$.
 We set
\[
	U(x,t) = \bar{u} \left( x + s(t),t \right)
\]
as before.
 The equation \eqref{EEq} now yields
\[
	 \partial_t U - \partial_x^2 U - \dot{s}\partial_x U
	 = \bar{f} \left( x + s(t),t \right) - \alpha\delta \left( x - s_+(t) + s(t) \right),
\]
instead of \eqref{ETr}.
 We proceed as before and obtain that
\[
	\hat{U}(\lambda,t) \leq
	\int_0^t \exp \left( -\int_\sigma^t p(\lambda, \tau) \, d\tau \right)
	\left( \hat{\bar{g}}(\lambda,\sigma) - \alpha \right) d\sigma
\]
with $\bar{g}(x,t)=\bar{f}\left(x+ s(t),t\right)$.
 We conclude that the right-hand side is negative for sufficiently large $\lambda$.
 This implies that $\hat{U}(\lambda,t)<0$ for sufficiently large $\lambda$ and this contradicts the non-negativity of $u$.
 Thus, there is no solution.
\end{proof}

\section{Multi-dimensional problems} \label{SMul}

We extend the idea for one-dimensional setting to multi-dimensional setting.
 We begin by proving Theorem \ref{TMul}.
\begin{proof}[Proof of Theorem \ref{TMul}]
We may take $m=(1,0,\ldots,0)$ by notation.
 We set
\[
	U(x,t) = u(x_1 + s(t,x'), x', t), 
	\quad x = (x_1, x').
\]
We notice that
\[
	\nu=\frac{1}{\sqrt{1+|\nabla_{x'}s|^2}} (-1, \nabla_{x'}s).
\]
Since $\partial u/\partial \nu = \nabla u \cdot \nu = -\alpha$ on $S(t)$ and $u=0$ on $S(t)$ by \eqref{EN} and \eqref{ED}, we observe that $\nabla u=-\alpha\nu$ on $S(t)$. 
 This implies that $\partial u/\partial x_1=\alpha/\sqrt{1+|\nabla_{x'} s|^2}$.
 Then the problem \eqref{EEq}, \eqref{ED}, \eqref{EN} is transformed as
\begin{align} 
	\partial_t U - \partial_{x_1}^2 U 
	&- \dot{s}\partial_{x_1} U - \sum_{i=2}^N \left( \partial_{x_i} - (\partial_{x_i} s) \partial_{x_1} \right) ^2 U \notag \\
	&= f \left( x_1 + s(t,x'), x', t \right), \quad
	x_1 > 0,\ t \in (0,T),\ x' \in \mathbb{R}^{N-1} \label{EMTr} \\
	U(0, x', t) &= 0, \quad
	t \in (0,T), \ x' \in \mathbb{R}^{N-1} \label{EMDTr} \\
	\partial_{x_1} U(0, x', t) &= \alpha\Bigm/\sqrt{1+|\nabla_{x'} s|^2}, \quad
	t \in (0,T), \ x' \in \mathbb{R}^{N-1}, \label{EMNTr}\end{align}
where $\nabla_{x'}=(\partial_{x_2},\ldots,\partial_{x_N})$ denotes the gradient in $x'$.

We shall take the Laplace transform of \eqref{EMTr} (with respect to $x_1$ variable) of both sides.
 Let $\hat{U}(\lambda,x',t)$ denote
\[
	\hat{U} (\lambda, x', t)
	= \int_0^\infty e^{-\lambda x_1} U (x_1, x', t)\, dx_1.
\]
In a similar way to derive \eqref{ELap2}, the equations \eqref{EMTr}, \eqref{EMDTr}, \eqref{EMNTr} yield
\begin{equation}
\begin{split} \label{EMLap}
	\left \{
	\begin{array}{l}
	\partial_t \hat{U} + \left(p(\lambda, x', t) - L(\lambda, x', t) \right) \hat{U} = \hat{g} - \sqrt{1+|\nabla_{x'} s|^2} \alpha \\
	\hat{U} (\lambda, x', 0) = 0
	\end{array}
	\right.
\end{split}
\end{equation}
with $p(\lambda, x', t) :=-\lambda^2-\dot{s}(t,x')\lambda$,
\begin{align*}
	L (\lambda, x', t)
	& := \sum_{i=2}^N \left( \partial_{x_i} - (\partial_{x_i} s(t, x')) \lambda \right) ^2,\\
	g (x_1, x', t)
	& := f \left( x_1 + s(t,x'), x', t \right).
\end{align*}
In the above calculation, we used the formula that the Laplace transform of $\sum_{i=2}^N \left( \partial_{x_i} - (\partial_{x_i} s) \partial_{x_1} \right)^2 U$ is $L (\lambda, x', t)\hat{U} - \alpha \vert \nabla_{x'} s \vert^2 / \sqrt{1 + \vert \nabla_{x'} s \vert^2}$.
 The solution $\hat{U}$ of \eqref{EMLap} can be written at least formally
\[
	\hat{U} (\lambda,x',t) = \int_0^t T(t,\sigma)
	\left( \hat{g} (\lambda,x',\sigma) - \sqrt{1+|(\nabla_{x'} s)(\sigma, x')|^2} \alpha \right) d\sigma,
\]
where $T(t,\sigma)$ is a propagator of $L-p$, i.e., $w(t)=T(t,\sigma)v_0$ is a solution of $dw/dt=(L-p)w$ for $t>\sigma$ with $w(\sigma)=v_0$.
 This can be justified if $g$ is in $C^\theta\bigl(0,T;BUC(\mathbb{R}^{N-1})\bigr)$ and $s$ is in $C^{1+\theta}\bigl(0,T;BUC(\mathbb{R}^{N-1})\bigr)\cap C^\theta\bigl(0,T;BUC^2(\mathbb{R}^{N-1})\bigr)$ for some $\theta\in(0,1]$, where $C^{k+\theta}$ denotes the space of all $\theta$-H\"older continuous functions with their derivatives up to $k$th order and $BUC^k$ denotes the space of all bounded uniformly continuous functions with their derivatives up to $k$th order and $BUC=BUC^0$; 
 see e.g.\ \cite{L} or \cite{T}, but we do not use this representation. 
 We note that 
\begin{equation*}
	L - p = \Delta' - 2\lambda \sum_{i=2}^N \partial_{x_i} s \partial_{x_i} + (1+\sum_{i=2}^N|\partial_{x_i} s|^2) \lambda^2 + (\dot{s} - \Delta' s) \lambda, 
\end{equation*}
where $\Delta' = \sum_{i=2}^N \partial_{x_i}^2$ denotes Laplacian with respect to $x'$. 

As in the one-dimensional setting,
\[
	\hat{g} (\lambda,x',\sigma) - \sqrt{1+|\nabla_{x'}s|^2} \alpha \leq - \sqrt{1+|\nabla_{x'}s|^2} \alpha/2
\]
for $\sigma\in(0,T)$, $x'\in\mathbb{R}^{N-1}$ provided that
\[
	 \lambda \ge \lVert f \rVert_\infty /(2\alpha).
\]
By this choice of $\lambda$, the right-hand side of \eqref{EMLap} is negative in $\mathbb{R}^{N-1}\times(0,T)$.
 We notice that $\hat{U}$ is bounded and continuous on $\mathbb{R}^{N-1}\times[0,T)$ with $\left.\hat{U}\right|_{t=0}=0$.
 Since $\hat{U}$ solves a parabolic linear equation \eqref{EMLap} with bounded continuous coefficients on $\mathbb{R}^{N-1}\times(\delta,T)$ for any $\lambda>0$ with some control of coefficients as $t\to0$, by applying the maximum principle, we conclude that $\hat{U}(\lambda,x',t)\leq0$ for all $x'\in\mathbb{R}^{N-1}\times(0,T)$ for large $\lambda>0$; for a detailed statement for the maximum principle we use, see Lemma \ref{LMax} and its Remark \ref{RIM} below.
 Since $\hat{U}(\lambda,x',t)\geq0$, this implies $u\equiv0$.
 Thus there is no solution of our problem.
\end{proof}
\begin{lemma} \label{LMax}
Let $w\in C\left(\mathbb{R}^{N-1}\times[0,T)\right)$ be a bounded solution of
\[
	\partial_t w - Pw = g \le 0	\quad\text{in}\quad 
	\mathbb{R}^{N-1} \times (0,T)
\]
with $\left.w\right|_{t=0}=0$.
 Assume that $P$ is an operator of the form
\[
	P = \sum_{1\leq i,j\leq N-1} a_{ij} (z,t) \frac{\partial^2}{\partial z_i \partial z_j}
	+ \sum_{\ell=1}^{N-1} b_\ell (z,t) \frac{\partial}{\partial z_\ell} + c(z,t), \
	z \in \mathbb{R}^{N-1}, \ t \in (0,T).
\]
Assume that $a_{ij},b_\ell, c=c_1+c_2, c_1, g \in C\left(\mathbb{R}^{N-1}\times(0,T)\right)$ and $a_{ij},b_\ell,g \in L^\infty\left(\mathbb{R}^{N-1}\times(0,T)\right)$.
 Assume that $\left\lVert (c_2)_+(\cdot, t) \right\rVert_{L^\infty(\mathbb{R}^{N-1})}<\infty$ ($a_+=\max(a,0)$) for $t\in(0,T)$ and
\[
	C_1(z,t) := \lim_{\sigma\downarrow0} \int_\sigma^t c_1(z,\tau)\, d\tau
	\eqqcolon \int_{0\downarrow}^t c_1(z,\tau)\, d\tau
\]
exists for all $(z,t)\in\mathbb{R}^{N-1}\times(0,T)$ and $\partial_{z_i}\partial_{z_j}C_1,\partial_{z_i}C_1,C_1\in L^\infty\left(\mathbb{R}^{N-1}\times(0,T)\right)$ for $1\le i,j\le N-1$. 
 Assume that
\begin{align*}
	\sum_{1\leq i,j\leq N-1} a_{ij} (z,t) \xi_i \xi_j \geq 0
	\quad&\text{for}\quad \xi \in \mathbb{R}^{N-1},
	\ z \in \mathbb{R}^{N-1} \\
	&\text{with}\quad \xi = (\xi_1,\ldots,\xi_{N-1}).
\end{align*}
Then $w\leq0$ on $\mathbb{R}^{N-1}\times[0,T)$.
\end{lemma}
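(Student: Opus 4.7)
The plan is to absorb the possibly singular lower-order coefficient $c_1$ into an exponential factor, reducing the claim to the classical parabolic weak maximum principle on the full space $\mathbb{R}^{N-1}$ with bounded continuous coefficients and nonnegative zero-order term.

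First I would fix $M>0$ (to be chosen later), set $\Phi(z,t):=C_1(z,t)+Mt$, and introduce $v:=e^{-\Phi}w$. Since $C_1(z,0)=0$ and $C_1\in L^\infty$, the function $\Phi$ is bounded on $\mathbb{R}^{N-1}\times[0,T)$; hence $v$ is bounded and continuous on $\mathbb{R}^{N-1}\times(0,T)$, and $v(z,t)\to 0$ locally uniformly in $z$ as $t\downarrow 0$ (because $|v|\le e^{\|\Phi\|_\infty}|w|$). The hypotheses on $C_1$ ensure that $\Phi$ is $C^1$ in $t$ with $\partial_t\Phi=c_1+M$ for $t>0$, and that $\partial_{z_i}\Phi=\partial_{z_i}C_1$, $\partial_{z_i}\partial_{z_j}\Phi=\partial_{z_i}\partial_{z_j}C_1$ are bounded.

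Next, differentiating $w=e^{\Phi}v$ and substituting into $\partial_t w-Pw=g$ yields, after exploiting symmetry of $a_{ij}$ and the exact cancellation of $c_1$ against $\partial_t\Phi-(c_1+c_2)+c_1$, the identity
\[
\partial_t v-\sum_{i,j}a_{ij}\,\partial_{z_i}\partial_{z_j}v-\sum_\ell \tilde b_\ell\,\partial_{z_\ell}v+\tilde c\,v=e^{-\Phi}g\le 0,
\]
where the modified drift and zero-order coefficient are
\[
\tilde b_\ell=b_\ell+2\sum_{i}a_{i\ell}\,\partial_{z_i}C_1,\qquad
\tilde c=M-c_2-\sum_{i,j}a_{ij}\,\partial_{z_i}C_1\,\partial_{z_j}C_1-\sum_{i,j}a_{ij}\,\partial_{z_i}\partial_{z_j}C_1-\sum_\ell b_\ell\,\partial_{z_\ell}C_1.
\]
The coefficients $\tilde b_\ell$ are bounded and continuous by the assumptions on $a_{ij}$, $b_\ell$, and $\nabla_z C_1$; similarly, all terms in $\tilde c$ other than $M$ are bounded above, using $-c_2\ge -(c_2)_+$ and the $L^\infty$ control of $C_1$, $\nabla_z C_1$, $\nabla_z^2 C_1$. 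Choosing $M$ sufficiently large therefore makes $\tilde c\ge 0$ on $\mathbb{R}^{N-1}\times(0,T)$.

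At this point the transformed problem for $v$ falls under the standard weak maximum principle for bounded solutions of parabolic equations on $\mathbb{R}^{N-1}\times(0,T)$ with bounded continuous coefficients and nonnegative zero-order term, proved via comparison against an exponential barrier of Gaussian growth in $|z|$ (see, e.g., \cite{F} or \cite{LSU}): boundedness of $v$, the initial condition $\limsup_{t\downarrow 0}v(z,t)\le 0$, and the one-sided inequality above force $v\le 0$ throughout $\mathbb{R}^{N-1}\times[0,T)$. Multiplying by $e^{\Phi}>0$ gives $w\le 0$. The central technical point—and the reason the hypotheses are phrased in terms of the antiderivative $C_1$ rather than of $c_1$ itself—is precisely to secure a uniform upper bound on $\tilde c$ after the exponential change of variables; the computation above shows that the structural conditions on $C_1$ and $(c_2)_+$ are exactly what this argument requires.
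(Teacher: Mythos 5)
Your proof is correct and mathematically sound, but it takes a genuinely different route from the paper's. You combine the paper's two exponential substitutions into a single one, setting $\Phi=C_1+Mt$ and $v=e^{-\Phi}w$, which is a mild streamlining; the more substantive difference is how you close the argument. The paper first multiplies by a spatial weight $\rho$ with $\rho(z)\to 0$ as $|z|\to\infty$ (e.g., $\rho(z)=1/|z|$ for $|z|\ge 1$), checks that the transformed coefficients remain bounded, shifts the zero-order coefficient to be $\le -1$, and then argues directly at an attained positive maximum of the weighted solution; this is entirely self-contained. You instead invoke, as a black box, the Phragm\'en--Lindel\"of/Tikhonov-type weak maximum principle for bounded solutions on all of $\mathbb{R}^{N-1}$ via a Gaussian barrier. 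Both are legitimate: the paper's route avoids having to verify that the cited barrier argument holds under the mere degeneracy hypothesis $\sum a_{ij}\xi_i\xi_j\ge 0$ (the classical statements in \cite{F}, \cite{LSU} are phrased for uniformly parabolic operators, though the barrier computation does go through without ellipticity since the degeneracy only helps), while your route is shorter if one is willing to take the whole-space maximum principle for granted. One small point worth spelling out in your version: to choose $M$ you need a \emph{uniform-in-$t$} bound on $(c_2)_+$ and on the terms involving $C_1$ and its derivatives; the latter is supplied by the $L^\infty(\mathbb{R}^{N-1}\times(0,T))$ hypotheses on $C_1$, $\nabla C_1$, $\nabla^2 C_1$, and the former is used in exactly the same implicit way in the paper's Step 3 (where $\mu(t)$ is built from $\sup_{\tau,z}c_2^*$). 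Also note that your formula for $\tilde b_\ell$ tacitly uses symmetry of $a_{ij}$; this is harmless since one may always symmetrize, but it is worth saying.
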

\begin{proof}[Proof of Lemma \ref{LMax}]
This version of the (weak) maximum principle is, in principle, rather standard;
 see e.g.\ \cite{PW} and \cite{PS}.
 We give a proof for the reader's convenience.
\begin{enumerate}
\item[1.]
 We may assume that
\[
	\lim_{|z|\to\infty} \sup_{0\leq t < T} w(z,t) = 0.
\]
We take a positive $\rho\in C^\infty(\mathbb{R}^{N-1})$ with $\lim_{|z|\to\infty}\rho(z)=0$ and set
\[
	\tilde{w}(z,t) = \rho(z) w(z,t).
\]
Then $\tilde{w}$ solves
\[
	\partial_t \tilde{w} - Q\tilde{w} = \rho g
\]
with
\begin{align*}
	&Q = \sum_{1\leq i,j\leq N-1} a_{ij} \frac{\partial^2}{\partial z_i \partial z_j}
	+ \sum_{\ell=1}^{N-1} \tilde{b}_\ell \frac{\partial}{\partial z_\ell} + \tilde{c}, \\
	&\tilde{b}_\ell= b_\ell - \sum_{i=1}^{N-1} (a_{\ell i} + a_{i\ell}) \rho_{z_i} \bigm/ \rho, \\
	&\tilde{c}_1 = c_1, \quad
	\tilde{c}_2 = c_2	- \sum_{\ell=1}^{N-1} b_\ell \rho_{z_\ell}/\rho
	- \sum_{1\leq i,j\leq N-1} a_{ij} ( \rho_{z_i z_j} \rho
	 - 2 \rho_{z_i}\rho_{z_j}) \bigm/\rho^2, \\
	 &\tilde{c} = \tilde{c}_1 + \tilde{c}_2,
\end{align*}
since
\begin{align*}
	 \rho w_{z_i}
	&= \tilde{w}_{z_i} - \frac{\rho_{z_i}}{\rho} \tilde{w},\\
	 \rho w_{z_i z_j} &= \tilde{w}_{z_i z_j} - \rho_{z_j} w_{z_i} - \rho_{z_i} w_{z_j} - \frac{\rho_{z_i z_j}}{\rho} \tilde{w} \\
	&= \tilde{w}_{z_i z_j} - \frac{\rho_{z_j}}{\rho} \left( \tilde{w}_{z_i} -  \frac{\rho_{z_i}}{\rho} \tilde{w} \right)
	- \frac{\rho_{z_i}}{\rho} \left( \tilde{w}_{z_j} -  \frac{\rho_{z_j}}{\rho} \tilde{w} \right)
	- \frac{\rho_{z_i z_j}}{\rho} \tilde{w},
\end{align*}
where subscript $z_i$ denotes derivative with respect to $z_i$, i.e., $\rho_{z_i}=\partial\rho/\partial z_i$, $\rho_{z_i z_j}=\partial^2\rho\bigm/(\partial z_i \partial z_j)$.
 If
\[
	\rho_{z_i} / \rho, \quad
	\rho_{z_i z_j} / \rho, \quad
	\rho_{ z_i} \rho_{z_j} /\rho^2
\]
is bounded, then $Q$ satisfies all assumptions of $P$.
 Such $\rho$ with
\[
	\lim_{|z|\to\infty}\rho(z)=0
\]
is easily found.
 Indeed, if we take $\rho\in C^\infty(\mathbb{R}^{N-1})$ with $\rho(z)=1/|z|$ for $|z|\geq1$, then $\rho$ fulfills all desired properties.
 Thus
\[
	\lim_{|z|\to\infty} \sup_{0\leq t\leq T} \tilde{w} (z,t) = 0.
\]
(We may relax the assumption for $w$ allowing polynomial growth as $|z|\to\infty$ by taking $\rho(z)=e^{-|z|}$ for $|z|\geq1$.
 However, if $w$ allows exponential growth, there is no desired $\rho$.
 Indeed, the statement is false in general for such growing $w$.)
 
\item[2.]
 We may assume that $\tilde{c}_1=c_1=0$. 
 Indeed, we set $q(z,t)=\exp\left(-C_1(z,t)\right)$. 
 We define $w_1=q\tilde{w}$.
 Since we assume that $C_1$ is bounded on $\mathbb{R}^{N-1}\times(0,T)$, $w_1$ is a bounded solution satisfying
\[
	\lim_{|z|\to\infty} \sup_{0\le t\le T} w_1(z,t) = 0
\]
of
\[
	\partial_t w_1 - Q_* w_1 - c_2^* w_1 = q \rho g
\]
with $Q_*$ and $c_2^*$ of the form 
\begin{align*}
	Q _*&= \sum_{1\leq i,j\leq N-1} a_{ij} \frac{\partial^2}{\partial z_i \partial z_j}
	+ \sum_{\ell=1}^{N-1} \tilde{\tilde{b}}_\ell \frac{\partial}{\partial z_\ell}, \\
	\tilde{\tilde{b}}_\ell&= \tilde{b}_\ell - \sum_{i=1}^{N-1} (a_{\ell i} + a_{i\ell}) q_{z_i} \bigm/ q \\
	&=\tilde{b}_\ell + \sum_{i=1}^{N-1} (a_{\ell i} + a_{i\ell}) (C_1)_{z_i}, \\
	c_2^* &=  \tilde{c}_2	- \sum_{\ell=1}^{N-1} \tilde{b}_\ell q_{z_\ell}/q
	- \sum_{1\leq i,j\leq N-1} a_{ij} (q_{z_i z_j}q
	 - 2 q_{z_i}q_{z_j}) \bigm/q^2\\
	 &=\tilde{c}_2	+ \sum_{\ell=1}^{N-1} \tilde{b}_\ell (C_1)_{z_\ell}
	+ \sum_{1\leq i,j\leq N-1} a_{ij} ((C_1)_{z_i z_j}
	 + (C_1)_{z_i}(C_1)_{z_j}).
\end{align*}
Since we assume that $(C_1)_{z_i}$, $(C_1)_{z_i z_j}$ are bounded in $\mathbb{R}^{N-1}\times(0,T)$, so are $\tilde{\tilde{b}}_\ell$ and $(c_2^*)_+$ for $\ell=1,\ldots,N-1$.
 Thus, the operator $Q_*+c_2^*$ satisfies the all assumptions of $P$ with $c_1=0$.
\item[3.] We may assume that $c_2^*\le-1$.
 Indeed, we set $\mu(t)=t\left(\sup_{\tau\in(0,T)}\sup_z c_2^*(z,\tau)+1 \right)$ and $\tilde{q}=\exp\left(-\mu(t)\right)$.
 We define $\tilde{\tilde{w}}=\tilde{q}w_1$.
 Then $\tilde{\tilde{w}}$ solves
\[
	\partial_t \tilde{\tilde{w}} - Q_* \tilde{\tilde{w}}
	- \tilde{\tilde{c}}_2 \tilde{\tilde{w}}
	= \tilde{q} q \rho g
\]
with $\tilde{\tilde{c}}_2=c_2^* - \sup_{\tau\in(0,T)}\sup_z c_2^*(z,\tau)-1\le-1$. 
\item[4.]
 Suppose that $\tilde{\tilde{w}}$ takes a positive value at $(z_0,t_0)$,
$z_0\in\mathbb{R}^{N-1}$, $t_0\in(0,T)$.
 By Step 1, $\tilde{\tilde{w}}$ must take a positive maximum on $\mathbb{R}^{N-1}\times(0,t_0]$.
 Let $(z_*,t_*)$ be a maximum point, i.e.,
\[
	\tilde{\tilde{w}}(z_*,t_*) = \max_{\mathbb{R}^{N-1}\times[0,t_0]} \tilde{\tilde{w}} > 0.
\]
Since $\left.w\right|_{t=0}=0$, $t_*>0$.
 At the maximum point, we see
\begin{align*}
	&\partial_t \tilde{\tilde{w}} (z_*,t_*) \ge 0, \\
	&\nabla \tilde{\tilde{w}} (z_*,t_*) = 0, \\
	&\sum_{1\le i,j \le N-1} \tilde{\tilde{w}}_{z_i z_j} (z_*,t_*) \xi_i \xi_j \le 0
	\quad\text{for}\quad \xi \in \mathbb{R}^{N-1}.
\end{align*}
Since
\[
	0 \ge \left(\partial_t \tilde{\tilde{w}} - Q_* \tilde{\tilde{w}} - \tilde{\tilde{c}}_2 \tilde{\tilde{w}} \right) (z_*,t_*)
	\ge - \left(\tilde{\tilde{c}}_2 \tilde{\tilde{w}} \right) (z_*,t_*),
\]
we observe that 
\[
	\tilde{\tilde{c}}_2 (z_*,t_*) \ge 0,
\]
which yields a contradiction.
 Thus, $\tilde{\tilde{w}}\le0$ on $\mathbb{R}^{N-1}\times(0,T)$.
 This implies that $w\le0$ on $\mathbb{R}^{N-1}\times(0,T)$.
\end{enumerate}
\end{proof}
\begin{remark} \label{RIM}
To prove Theorem \ref{TMul}, we apply Lemma \ref{LMax} with
\begin{align*}
	& a_{ij} = \delta_{ij}, \quad b_\ell = -2\lambda\partial_{x_{\ell-1}} s, \quad
	c_1 = \dot{s}\lambda, \quad
	c_2 = (-\Delta's) \lambda + \left( 1 + \sum_{i=2}^N |\partial_{x_i}s|^2 \right) \lambda, \\
	& g = \hat{g} - \alpha \Bigm/ \sqrt{1+|\nabla_{x'}s|^2}
	\quad\text{with}\quad
	z = (x_2, \ldots, x_N).
\end{align*}
We do not assume that $|\dot{s}|$ is integrable in $t$ on $(0,T)$.
 However, since we assume that $s$ is continuous up to $t=0$ and that $\dot{s}$ is bounded on $(\delta,T)\times\mathbb{R}^{N-1}$ for any $\delta>0$, the improper integral
\[
	s(t,z) = \int_{0\downarrow}^t \dot{s}(\tau,z)\, d\tau, \quad
	z = (z_2, \ldots, x_N)
\]
exists for all $z\in\mathbb{R}^{N-1}$.
 Since we assume that $S(t)\to S_0$ as $t\to0$ uniformly in $C^2$ topology, $s$, $\partial_{z_i}s$, $\partial_{z_i}\partial_{z_j}s$ are bounded in $\mathbb{R}^{N-1}\times(0,T)$.
 Thus, the assumption on $c_1$ is fulfilled.
 (We do not use the continuity of $\partial_{z_i}s$, $\partial_{z_i}\partial_{z_j}s$ up to $t=0$.)
 It is easy to see that all other assumptions on Lemma \ref{LMax} are fulfilled.
 Thus, we are able to apply Lemma \ref{LMax} to conclude for sufficiently large $\lambda$,
\[
	\hat{U} (\lambda,x',t) \le 0 \quad\text{for all}\quad
	x' \in \mathbb{R}^{N-1} \times (0,T).
\]
\end{remark}
Corollary \ref{CMul} can be proved in a similar way to prove Corollary \ref{C1D} by noting that $U(x_1,x',t)$ is a subsolution of \eqref{EMTr}.

\section{Self-similar solutions} \label{SSel}

If the external source $f$ is unbounded as $t\downarrow0$, there may exist a solution to \eqref{EEq}, \eqref{ED}, \eqref{EN} with \eqref{EInt}.
 We shall give an example of a self-similar solution.
\begin{thm}[Existence of a self-similar solution] \label{TES}
Assume $f=h/\sqrt{t}$ and $N=1$ with a constant $h>0$.
 Assume that $D_0=(0,\infty)$.
 Then there exists a unique bounded solution to \eqref{EEq}, \eqref{ED}, \eqref{EN} with \eqref{EInt} satisfying $D(t)=\left(s(t),\infty\right)$ with $s(t)=\sigma\sqrt{t}$ with a constant $\sigma\in\mathbb{R}$.
 The function $h\mapsto\sigma(h)$ is strictly decreasing smooth function.
 Moreover, $\sigma\left(\alpha/\sqrt{\pi}\right)=0$ and $\sigma<0$ (resp.\ $\sigma>0$) if and only if $h>\alpha/\sqrt{\pi}$ (resp.\ $h<\alpha/\sqrt{\pi}$).
\end{thm}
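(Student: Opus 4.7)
The plan is to exploit the scale invariance $(u(x,t), s(t)) \mapsto (\lambda^{-1} u(\lambda x, \lambda^2 t), \lambda^{-1} s(\lambda^2 t))$, under which the source $f = h/\sqrt{t}$, the conditions \eqref{ED}--\eqref{EN}, and the initial condition $u_0 \equiv 0$ are all invariant. Restricting to the self-similar class $s(t) = \sigma\sqrt{t}$, I substitute $u(x,t) = \sqrt{t}\, v(\eta)$ with $\eta = x/\sqrt{t}$; a routine calculation converts \eqref{EEq}--\eqref{EN} into the linear second-order ODE
\[
v'' + \tfrac{\eta}{2} v' - \tfrac{1}{2} v = -h \text{ on } (\sigma,\infty), \quad v(\sigma) = 0, \quad v'(\sigma) = \alpha,
\]
supplemented by the requirement $v \in L^\infty(\sigma, \infty)$, which encodes both the boundedness of $u$ and the initial condition $u_0 \equiv 0$.

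Next I solve the ODE explicitly. One homogeneous solution is $v_1(\eta) = \eta$ (by inspection) and $v \equiv 2h$ is a particular solution; reduction of order against $v_1$ yields a second independent homogeneous solution $v_2$, expressible in closed form using $e^{-\eta^2/4}$ and its antiderivative, with $v_2(\eta) \sim (\sqrt{\pi}/2)\eta$ as $\eta \to \infty$ (using $\int_0^\infty e^{-s^2/4}\,ds = \sqrt{\pi}$). The boundedness requirement thus forces a specific linear combination and reduces the family of admissible $v$ to
\[
v(\eta) = 2h + c\bigl[ e^{-\eta^2/4} - \tfrac{\eta}{2}\,\psi(\eta) \bigr], \quad \psi(\eta) := \int_\eta^\infty e^{-s^2/4}\, ds,
\]
for which a short computation gives $v'(\eta) = -(c/2)\,\psi(\eta)$. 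The Neumann condition at $\sigma$ then forces $c = -2\alpha/\psi(\sigma)$, and the Dirichlet condition collapses to the scalar equation $h = \alpha F(\sigma)$, where
\[
F(\sigma) := \frac{e^{-\sigma^2/4}}{\psi(\sigma)} - \frac{\sigma}{2}.
\]

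It remains to show that $F : \mathbb{R} \to (0,\infty)$ is a smooth strictly decreasing bijection with $F(0) = 1/\sqrt{\pi}$. The value at $0$ and the limits $F(-\infty) = +\infty$, $F(+\infty) = 0^+$ follow from $\psi(0) = \sqrt{\pi}$, $\psi(-\infty) = 2\sqrt{\pi}$, and the standard tail asymptotic $\psi(\sigma) \sim (2/\sigma)\,e^{-\sigma^2/4}$. For strict monotonicity, rather than grind out $F'$ directly I would observe that the substitution $s = \sigma + \xi$ gives
\[
\psi(\sigma)\, e^{\sigma^2/4} = \int_0^\infty e^{-\sigma \xi/2}\, e^{-\xi^2/4}\, d\xi,
\]
which is (up to an affine reparametrization in $\sigma$) the Laplace transform of the positive measure $e^{-\xi^2/4}\,d\xi$ on $[0,\infty)$. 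Its logarithm $M(\sigma) := \log\bigl[\psi(\sigma) e^{\sigma^2/4}\bigr]$ is therefore strictly convex by the classical log-convexity of Laplace transforms (its second derivative being a positive variance), and a short calculation gives $F = -M'$, so $F' = -M'' < 0$ on all of $\mathbb{R}$. The inverse function theorem then delivers a smooth strictly decreasing inverse $h \mapsto \sigma(h)$, and the sign comparisons against $\sigma = 0$ follow at once from $F(0) = 1/\sqrt{\pi}$ and the monotonicity.

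Finally, for the constructed $\sigma$ and $c$, $v'(\eta) = (\alpha/\psi(\sigma))\,\psi(\eta) > 0$, so $v$ strictly increases from $v(\sigma) = 0$ to $\lim_{\eta \to \infty} v(\eta) = 2h$; hence $u(x,t) = \sqrt{t}\,v(x/\sqrt{t})$ is non-negative, bounded by $2h\sqrt{T}$, tends to $0$ uniformly as $t \downarrow 0$, and satisfies \eqref{EEq}--\eqref{EInt}. For uniqueness, the scale invariance of the Dirichlet IBVP on $\{x > \sigma\sqrt{t}\}$ with zero data forces any bounded solution to be self-similar of the form $\sqrt{t}\,v_\sigma(x/\sqrt{t})$, after which the Neumann condition reduces to $h = \alpha F(\sigma)$ and pins down both $\sigma$ and the solution. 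The main obstacle is the strict monotonicity of $F$, which the log-convexity argument above handles cleanly without any direct Mills-ratio computation.
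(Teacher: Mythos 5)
Your proposal follows the same overall route as the paper: substitute $u(x,t)=\sqrt{t}\,v(x/\sqrt{t})$, reduce to the Hermite-type ODE with the fundamental system $\{\eta,\ e^{-\eta^2/4}-\tfrac{\eta}{2}\psi(\eta)\}$ and particular solution $2h$, impose boundedness to kill the linear mode, and collapse the two free-boundary conditions to the scalar equation $h=\alpha F(\sigma)$ with $F(\sigma)=e^{-\sigma^2/4}/\psi(\sigma)-\sigma/2$ — this is exactly the paper's relation $W_1(\sigma)/(-2W_1'(\sigma))=h/\alpha$. The genuine difference is in the crux: the strict monotonicity of $F$. The paper works directly with $W_1/W_1'$, sets $F=(W_1')^2-W_1W_1''$, shows $F'<0$ and $F(0)>0$, $F(\infty)=0$ — but that computation is only carried out explicitly for $y>0$, and the extension to $y<0$ (needed when $h>\alpha/\sqrt{\pi}$) is asserted rather than argued. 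You instead write $\psi(\sigma)e^{\sigma^2/4}=\int_0^\infty e^{-\sigma\xi/2}e^{-\xi^2/4}\,d\xi$, recognize it as a Laplace transform of a positive measure, invoke its strict log-convexity, and observe $F=-M'$ with $M=\log(\psi e^{\sigma^2/4})$, so $F'=-M''<0$ on all of $\mathbb{R}$ in one stroke. This is cleaner, more conceptual (the monotonicity becomes a variance $>0$ statement), and handles both signs of $\sigma$ uniformly. You also spell out nonnegativity ($v'=(\alpha/\psi(\sigma))\psi>0$, so $0\le v\le 2h$) and a uniqueness argument via scale invariance of the fixed-domain Dirichlet problem, neither of which the paper states explicitly; the latter does implicitly use a uniqueness theorem for bounded solutions of the heat equation on the parabolic wedge $\{x>\sigma\sqrt{t}\}$ — reasonable, but worth a citation or a sentence since near $t=0$ the free boundary is tangent to $\{t=0\}$.
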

In the rest of this section, we shall prove Theorem \ref{TES}.
 We consider \eqref{EEq} with $f=h/\sqrt{t}$ and
\begin{align*}
	u(x,t) &= \sqrt{t} w \left( x/\sqrt{t} \right), \\
	s(t) &= \sigma \sqrt{t}.
\end{align*}
Since
\begin{align*}
	\partial_t u &= \frac{1}{2\sqrt{t}} w \left( x/\sqrt{t} \right)
	+ \sqrt{t} \left(\partial_t\left( \frac{x}{\sqrt{t}} \right)\right) w' \left( x/\sqrt{t} \right) \\
	&= \frac{1}{2\sqrt{t}} w(y) - \frac{1}{2\sqrt{t}} yw'(y)
	\quad\text{with}\quad y = x/\sqrt{t}, \\
	\partial_x^2 u &= \frac{1}{\sqrt{t}} w''(y),
\end{align*}
our system \eqref{EEq}, \eqref{ED}, \eqref{EN} is now of the form
\begin{align}
	\frac12 w(y) - \frac{y}{2} w'(y) - w''(y)
	&= h, \quad y > \sigma \label{EHH} \\
	w(\sigma) &= 0 \label{EHD} \\
	w'(\sigma) &= \alpha. \label{EHN}
\end{align}
The first ODE is of Hermite type and we are able to solve it explicitly. 
\begin{prop} \label{PGe}
Let $W$ be a solution of
\begin{equation} \label{EInh}
	\frac12 w - \frac{y}{2} w'(y) - w''(y) = h.
\end{equation}
Then, it can be written as
\[
	W(y) = -c_1 W_1(y) + c_0 W_0(y) + 2h
\]
with some constant $c_1,c_0\in\mathbb{R}$, where
\[
	W_0(y) = y, \quad
	W_1(y) = y \int_y^\infty e^{-\zeta^2/4} \frac{1}{\zeta^2}\, d\zeta.
\]
\end{prop}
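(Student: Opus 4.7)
The plan is to treat \eqref{EInh} as an inhomogeneous second order linear ODE and to assemble its general solution from one particular solution plus an arbitrary linear combination of two independent solutions of the associated homogeneous equation $\frac12 w - \frac{y}{2} w' - w'' = 0$. The stated formula already tells me that the particular solution should be the constant $W_p \equiv 2h$ and that the two homogeneous solutions should be $W_0(y) = y$ and $W_1(y) = y \int_y^\infty e^{-\zeta^2/4}/\zeta^2 \, d\zeta$; the task is thus essentially to verify these and to rule out further solutions.

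First I would check by direct substitution that $W_p(y) \equiv 2h$ solves \eqref{EInh}, which is immediate since $W_p' = W_p'' = 0$ and $\tfrac12(2h) = h$. Second, I would check that $W_0(y) = y$ solves the homogeneous equation, which is again immediate: $\tfrac12 y - \tfrac{y}{2} - 0 = 0$. The one non-trivial step is producing the second independent solution $W_1$, for which I would use reduction of order with the ansatz $w(y) = y\, v(y)$. After substitution into the homogeneous equation the $v$-terms cancel, leaving the first order separable equation
\[
y v''(y) + \Bigl(2 + \tfrac{y^2}{2}\Bigr) v'(y) = 0
\]
for $u := v'$. Separation of variables gives $v'(y) = C y^{-2} e^{-y^2/4}$, and the choice $C = -1$ together with the boundary datum $v(\infty) = 0$ — the improper integral converges by the Gaussian factor — yields $v(y) = \int_y^\infty e^{-\zeta^2/4}/\zeta^2 \, d\zeta$, so $W_1(y) = y v(y)$ matches the expression in the statement. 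Linear independence of $W_0$ and $W_1$ then follows from the Wronskian computation $W_0 W_1' - W_0' W_1 = y^2 v'(y) = -e^{-y^2/4}$, which never vanishes; hence every homogeneous solution is of the form $c_0 W_0 - c_1 W_1$ and the general solution of \eqref{EInh} takes the asserted form.

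I do not expect any serious obstacle, since the argument is a standard reduction-of-order computation once the right ansatz $w = y v$ is chosen. The only small point to keep in mind is that the integrand $e^{-\zeta^2/4}/\zeta^2$ has a non-integrable singularity at $\zeta = 0$, so $W_1$ is defined only on intervals that do not cross the origin; in the application to Theorem \ref{TES} the relevant range is $y > \sigma$, so this causes no difficulty once the sign of $\sigma$ is tracked in the subsequent matching of the boundary conditions \eqref{EHD}--\eqref{EHN}.
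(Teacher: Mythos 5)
Your proposal is correct and follows essentially the same route as the paper's proof: verify the constant particular solution $2h$, verify $W_0(y)=y$, and obtain $W_1$ by reduction of order from $w=yv$, which the paper records compactly as the observation that $v=w/W_0$ satisfies $(\log v')'=-\bigl(\tfrac{y}{2}+\tfrac{2}{y}\bigr)$. Your added Wronskian check and the remark about the singularity of the integrand at $\zeta=0$ are correct and consistent with what the paper handles separately in its Remark \ref{RRev}.
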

\begin{remark} \label{RRev}
Apparently, the function $W_1$ is only defined for $y>0$ as an analytic function.
 As we shall see later, $W_1(y)$ can be extended analytically for all $y\in\mathbb{R}$;
 see formula \eqref{EAF}.
\end{remark}
\begin{proof}
Clearly, $W_0$ is a solution of the homogeneous equation
\[
	\frac12 w - \frac{y}{2} w' - w'' = 0.
\]
Since $v=w/W_0$ solves $(\log v')'=-\left(\frac{y}{2}+\frac{2}{y}\right)$, we see $W_1$ is also a solution to the homogeneous equation so $\{W_0,W_1\}$ forms a fundamental system of the homogeneous equation.
 Since $w\equiv2h$ is a special solution to the inhomogeneous \eqref{EInh} equation, we obtain the desired representation formula.
\end{proof}

To prove Theorem \ref{TES}, it suffices to prove the next lemma.
\begin{lemma} \label{LUB}
There exists a unique $\sigma$ such that the system \eqref{EHH}, \eqref{EHD}, \eqref{EHN} admits a bounded solution.
 Moreover, $h\longmapsto\sigma(h)$ is $C^\infty$ and $\sigma'<0$.
 Furthermore, $\sigma(h)>0$ (resp.\ $\sigma(h)<0$) for $0<h<\alpha/\sqrt{\pi}$ (resp. $h>\alpha/\sqrt{\pi}$) and $\sigma\left(\alpha/\sqrt{\pi}\right)=0$.
\end{lemma}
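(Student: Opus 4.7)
My plan is to write the unique bounded solution of \eqref{EHH} in closed form and thereby reduce the lemma to a one-variable monotonicity statement about a ratio of Gaussian-type integrals; the verification of that monotonicity is the one delicate point. By Proposition \ref{PGe} every solution of \eqref{EHH} has the form $W(y)=-c_1 W_1(y)+c_0 W_0(y)+2h$ with $W_0(y)=y$. First I will rewrite $W_1$ via an integration by parts at a finite upper limit (then letting it tend to infinity) to obtain the identity
$$
W_1(y)=e^{-y^2/4}-\frac{y}{2}\int_y^\infty e^{-\zeta^2/4}\,d\zeta,
$$
valid for all $y\in\mathbb{R}$ (this realises the analytic continuation promised in Remark \ref{RRev}). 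Writing $I(y):=\int_y^\infty e^{-\zeta^2/4}\,d\zeta$, differentiation gives $W_1'(y)=-I(y)/2<0$, so $W_1$ is strictly decreasing with $\lim_{y\to+\infty}W_1(y)=0$; consequently $W_1>0$ and is bounded on any $(\sigma,\infty)$. Since $W_0(y)=y$ is unbounded while $W_1$ is bounded, boundedness of $W$ forces $c_0=0$, leaving $W(y)=2h-c_1 W_1(y)$.

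The two boundary conditions \eqref{EHD}--\eqref{EHN} now pin down $c_1$ twice: $W(\sigma)=0$ gives $c_1=2h/W_1(\sigma)$, while $W'(\sigma)=c_1 I(\sigma)/2=\alpha$ gives $c_1=2\alpha/I(\sigma)$. Hence the whole lemma collapses to the scalar equation
$$
h=\alpha\,\phi(\sigma),\qquad \phi(\sigma):=\frac{W_1(\sigma)}{I(\sigma)}=\frac{e^{-\sigma^2/4}}{I(\sigma)}-\frac{\sigma}{2},
$$
and my task becomes showing that $\phi$ is a $C^\infty$ strictly decreasing bijection $\mathbb{R}\to(0,\infty)$. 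Smoothness and positivity are immediate from $I,W_1>0$. The evaluation $\phi(0)=1/\sqrt{\pi}$ comes from $I(0)=\sqrt{\pi}$ and $W_1(0)=1$, which already pins down $\sigma(\alpha/\sqrt{\pi})=0$. The end-point limits $\phi(-\infty)=+\infty$ (using $I(-\infty)=2\sqrt{\pi}$ against $-\sigma/2\to+\infty$) and $\phi(+\infty)=0^+$ (using the two-term asymptotic $I(\sigma)=(2e^{-\sigma^2/4}/\sigma)(1-2/\sigma^2+O(\sigma^{-4}))$) then give the correct range, after which the inverse function theorem supplies the $C^\infty$ inverse $\sigma(h)$ with $\sigma'<0$ and the sign statement in the lemma follows from monotonicity combined with $\phi(0)=1/\sqrt{\pi}$.

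The hard part, and essentially the only nontrivial calculation, is the global strict monotonicity $\phi'<0$. A direct computation using $I'=-e^{-\sigma^2/4}$ and $W_1'=-I/2$ gives
$$
\phi'(\sigma)=\frac{-I(\sigma)^2/2+e^{-\sigma^2/4}W_1(\sigma)}{I(\sigma)^2},
$$
so the task reduces to proving the pointwise inequality $\psi(\sigma):=I(\sigma)^2 e^{\sigma^2/4}-2W_1(\sigma)>0$ on all of $\mathbb{R}$. The plan is to combine two facts: (i) $\lim_{\sigma\to+\infty}\psi(\sigma)=0$, which falls out of the two-term erfc expansion above (the leading order is $\psi(\sigma)\sim 8e^{-\sigma^2/4}/\sigma^4$), and (ii) $\psi'(\sigma)<0$ everywhere. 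A further short computation yields the clean identity
$$
\psi'(\sigma)=I(\sigma)\left[\frac{\sigma}{2}I(\sigma)\,e^{\sigma^2/4}-1\right],
$$
whose bracket is trivially negative for $\sigma\le 0$; for $\sigma>0$ the change of variables $\zeta=\sigma+u$ gives the crux estimate
$$
\sigma I(\sigma)\,e^{\sigma^2/4}=\sigma\int_0^\infty e^{-\sigma u/2-u^2/4}\,du<\sigma\int_0^\infty e^{-\sigma u/2}\,du=2.
$$
The monotonicity $\psi'<0$ together with $\psi(+\infty)=0$ then forces $\psi>0$ on all of $\mathbb{R}$, hence $\phi'<0$, and the lemma follows.
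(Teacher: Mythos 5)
Your proof is correct and follows essentially the same strategy as the paper: use Proposition \ref{PGe}, discard $W_0$ by boundedness, and reduce the two boundary conditions to the scalar equation $h/\alpha = W_1(\sigma)/(-2W_1'(\sigma))$, which is exactly your $\phi(\sigma) = W_1(\sigma)/I(\sigma)$ since $W_1' = -I/2$. The difference is in how the strict monotonicity of this ratio is established. The paper (Proposition \ref{PWW}) works with $F = (W_1')^2 - W_1 W_1''$, proves $F' < 0$ only for $y > 0$ via the estimate $y^2/\zeta^2 < 1$ for $\zeta > y$, and combines this with $F(0) > 0$, $F(\infty) = 0$; its proof of Lemma \ref{LUB} then asserts without detailed justification that $W_1/W_1'$ is strictly increasing for $y < 0$ as well. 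Your function $\psi = I^2 e^{\sigma^2/4} - 2W_1$ equals $4e^{\sigma^2/4} F$, so the positivity claims are equivalent, but your identity $\psi'(\sigma) = I(\sigma)\bigl[\tfrac{\sigma}{2}I(\sigma)e^{\sigma^2/4} - 1\bigr]$ together with the change of variables $\zeta = \sigma + u$ giving $\sigma I(\sigma)e^{\sigma^2/4} < 2$ for $\sigma > 0$ cleanly yields $\psi' < 0$ on all of $\mathbb{R}$, after which $\psi(+\infty) = 0$ forces $\psi > 0$ globally. This fills in the $\sigma < 0$ case that the paper glosses over, and avoids having to compute $F(0)$ separately. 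Your asymptotics and endpoint limits of $\phi$ are all correct, so the argument is complete.
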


We first study the behavior of
\[
	W_1(y) = y \int_y^\infty e^{-\zeta^2/4} \frac{d\zeta}{\zeta^2}.
\]
Differentiating $W_1$ yields
\[
	W'_1(y) = -y e^{-y^2/4} \frac{1}{y^2}
	+ \int_y^\infty e^{-\zeta^2/4} \frac{d\zeta}{\zeta^2}.
\]
Integrating by parts yields
\begin{align}
	&\int_y^\infty e^{-\zeta^2/4} \frac{d\zeta}{\zeta^2}
	= \int_y^\infty e^{-\zeta^2/4} \frac{d}{d\zeta} \left( -\frac{1}{\zeta} \right) d\zeta \notag \\
	=-&\int_y^\infty \frac{d}{d\zeta} \left( e^{-\zeta^2/4} \right) \left( -\frac{1}{\zeta} \right) d\zeta
	+ \left[ e^{-\zeta^2/4} \left( -\frac{1}{\zeta} \right) \right]_y^\infty \notag \\
	=-&\int_y^\infty e^{-\zeta^2/4} \frac{\zeta}{2} \frac{1}{\zeta} d\zeta
	+ e^{-y^2/4} \frac{1}{y}. \label{EIDN}
\end{align}
Thus,
\begin{align*}
	W'_1 &= -y e^{-y^2/4} \frac{1}{y^2}
	+ e^{-y^2/4} \frac{1}{y} - \frac12 \int_y^\infty e^{-\zeta^2/4}\, d\zeta. \\
	&= -\frac12 \int_y^\infty e^{-\zeta^2/4}\, d\zeta 
	\quad\text{for all}\quad y \in \mathbb{R}.
\end{align*}
In particular, $W_1$ is strictly decreasing and
\[
	W''_1(y) = \frac12 e^{-y^2/4} > 0
	\quad\text{for}\quad y \in \mathbb{R}.
\]
In particular, $W_1$ is convex.
 Since
\[
	W_1(y) \leq y \int_y^\infty e^{-\zeta^2/4}\, d\zeta \frac{1}{y^2}
	\leq \frac1y \int_y^\infty e^{-\zeta^2/4}\, d\zeta,
\]
we see that
\[
	\lim_{y\to\infty} y W_1(y) = 0.
\]
Its profile is as in Figure \ref{FW1}.
\begin{figure}[htb]
\centering
\includegraphics[width=50mm]{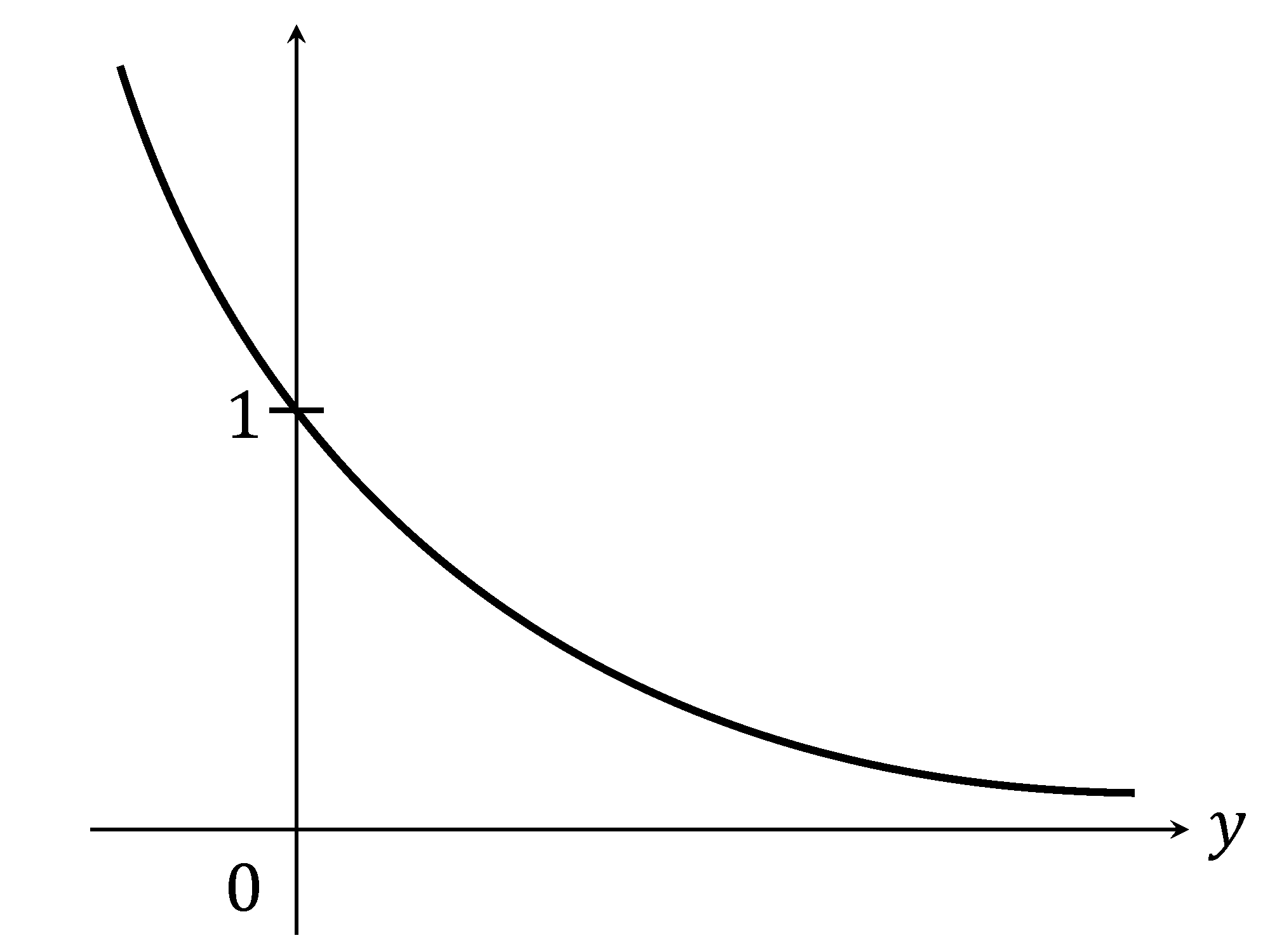}
\caption{graph of $W_1$} \label{FW1}
\end{figure}
Note that by \eqref{EIDN}, we obtain
\begin{equation} \label{EAF}
	W_1(y) = e^{-y^2/4} - \frac{y}{2} \int_y^\infty e^{-\zeta^2/4}\, d\zeta,
\end{equation}
so $W_1$ is defined for all $y\in\mathbb{R}$ and $W_1(0)=1$.
\begin{prop} \label{PWW}
$(W_1/W'_1)'(y)>0$ for $y>0$.
\end{prop}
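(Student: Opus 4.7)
My plan is to recast the claim as a log-concavity statement. Define
$$\phi(y) := (W_1'(y))^2 - W_1(y)\,W_1''(y);$$
the quotient rule gives $(W_1/W_1')'(y) = \phi(y)/(W_1'(y))^2$, so the proposition reduces to showing $\phi>0$ on $(0,\infty)$. I will establish this by proving that $\phi$ is strictly decreasing on $(0,\infty)$ with $\lim_{y\to\infty}\phi(y)=0$; these two facts together force $\phi>0$ throughout $(0,\infty)$.

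For the monotonicity I would use the closed form $W_1''(y)=\tfrac12 e^{-y^2/4}$ already derived in the excerpt, together with its derivative $W_1'''(y)=-\tfrac{y}{4}e^{-y^2/4}$, to simplify
$$\phi'(y) = W_1'(y)W_1''(y) - W_1(y)W_1'''(y) = \tfrac12 e^{-y^2/4}\!\left(W_1'(y)+\tfrac{y}{2}W_1(y)\right).$$
The bracketed factor is the key. Combining the integral representations $W_1(y) = y\int_y^\infty e^{-\zeta^2/4}\zeta^{-2}\,d\zeta$ and $W_1'(y) = -\tfrac12\int_y^\infty e^{-\zeta^2/4}\,d\zeta$ into a single integral yields
$$W_1'(y)+\tfrac{y}{2}W_1(y) = -\tfrac12\int_y^\infty e^{-\zeta^2/4}\,\frac{\zeta^2-y^2}{\zeta^2}\,d\zeta,$$
which is strictly negative for $y>0$ since the integrand is strictly positive on $\zeta>y$. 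Hence $\phi'(y)<0$ on $(0,\infty)$.

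For the limit at infinity, each of $W_1''(y)=\tfrac12 e^{-y^2/4}$, $W_1'(y)=-\tfrac12\int_y^\infty e^{-\zeta^2/4}\,d\zeta$, and the bound $W_1(y)\le y^{-1}\int_y^\infty e^{-\zeta^2/4}\,d\zeta$ (the latter used in the excerpt to show $yW_1(y)\to 0$) tends to $0$ as $y\to\infty$, so $\phi(y)\to 0$. Since $\phi$ is strictly decreasing with vanishing limit at infinity, $\phi>0$ on $(0,\infty)$, and the proposition follows. The only step with any subtlety is the recombination of the two integral representations into one with manifestly positive integrand; without that manipulation the sign of $W_1'+\tfrac{y}{2}W_1$ is not apparent from the separate formulas, and the rest of the argument reduces to bookkeeping with quantities already computed in the paragraphs preceding the proposition.
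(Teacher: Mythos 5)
Your proof is correct and follows essentially the same route as the paper's: both reduce the claim to showing $F(y)=(W_1')^2-W_1W_1''>0$, both establish $F'<0$ on $(0,\infty)$ by the same integral comparison (recognizing $y^2/\zeta^2<1$ for $\zeta>y$), and both close the argument using $F(y)\to 0$ as $y\to\infty$. Your factoring of $W_1''$ from $F'$ and the recombination into a single integral $-\tfrac12\int_y^\infty e^{-\zeta^2/4}(\zeta^2-y^2)\zeta^{-2}\,d\zeta$ is a clean way to present the same sign observation; the paper also records $F(0)=\tfrac12(\tfrac{\pi}{2}-1)>0$, which, as you implicitly note, is not actually needed once monotonicity and the vanishing limit are in hand.
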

\begin{proof} 
Since
\[
	(W_1/W'_1)' = F(y)/(W'_1)^2, \quad
	F(y) = (W'_1)^2 - W_1 W''_1,
\]
it suffices to prove that $F(y)>0$ for $y>0$.
 Differentiating $F$ to get
\begin{align*}
	F'(y) &= 2W'_1 W''_1 - W'_1 W''_1 - W_1 W'''_1 \\
	&= W'_1 W''_1 - W_1 W'''_1. \\
	&= -\frac12 \int_y^\infty e^{-\zeta^2/4}\, d\zeta\frac12 e^{-y^2/4}
	- y \int_y^\infty e^{-\zeta^2/4} \frac{d\zeta}{\zeta^2} \frac12 \left(-\frac12 \right) y e^{-y^2/4} \\
	&= \frac{e^{-y^2/4}}{4} \left\{ y^2 \int_y^\infty e^{-\zeta^2/4} \frac{1}{\zeta^2}\, d\zeta
	-\int_y^\infty e^{-\zeta^2/4}\, d\zeta \right\}.
\end{align*}
Thus
\[
	F'(y) < 0 \quad\text{for}\quad y > 0 \quad\text{since}\quad
	y^2/\zeta^2 < 1 \quad\text{for}\quad \zeta > y.
\]
We already know that $W_1(0)=1$, $W_1(\infty)=0$, where $W_1(\infty)=\lim_{y\to\infty}W_1(y)$.

Moreover,
\begin{gather*}
	W'_1(0) = -\frac12 \int_0^\infty e^{-\zeta^2/4}\, d\zeta
	= -\sqrt{\pi}/2, \quad
	W'_1(\infty) = 0 \\
	W''_1(0) = \frac12, \quad
	W''_1(\infty) = 0,
\end{gather*}
so we obtain $F(0) = \frac14\sqrt{\pi}^2-\frac12=\frac12\left(\frac{\pi}{2}-1\right)>0$ and $F(\infty)=0$.
 Thus, $F(y)>0$ for $y>0$.
 We thus conclude that $(W_1/W'_1)'(y)>0$ for $y>0$. 
\end{proof}
\begin{proof}[Proof of Lemma \ref{LUB}]
By Proposition \ref{PGe}, a solution of \eqref{EHH} is of the form
\[
	w(y) = -c_1 W_1(y) + c_0 W_0(y) + 2h_0.
\]
Since we know $W_0(y)\to\infty$ and $W_1(y)\to0$ as $y\to\infty$, $c_0$ must be zero if $w$ is bounded.
 The condition \eqref{EHD} and \eqref{EHN} are
\begin{gather*}
	c_1 W_1(\sigma) = 2h, \\
	-c_1 W'_1(\sigma) = \alpha.
\end{gather*}
Thus deleting $c_1$ to get
\begin{equation} \label{ESing}
	\frac{W_1(\sigma)}{-2W'_1(\sigma)} = \frac{h}{\alpha}.
\end{equation}
Evidently, $-W_1/(2W'_1)$ is $C^\infty$.
 By Proposition \ref{PWW}, $(-2W_1/W'_1)'<0$ on $\mathbb{R}$.
 Moreover,
\begin{align*}
	-\frac{W_1}{2W'_1}(0) &= \frac{1}{\int_0^\infty e^{-\zeta^2/4}\, d\zeta}
	= \frac{1}{\sqrt{\pi}} \\
	-\frac{W_1}{2W'_1}(\infty) &= \lim_{y\to\infty} 
	\frac{y \int_y^\infty e^{-\zeta^2/4} \frac{1}{\zeta^2}\, d\zeta}{\int_y^\infty e^{-\zeta^2/4} \, d\zeta} \\	
	&\le \lim_{y\to\infty} 
	\frac{y \int_y^\infty e^{-\zeta^2/4} \frac{1}{y^2}\, d\zeta}{\int_y^\infty e^{-\zeta^2/4}\,d\zeta} \\	
	&= \lim_{y\to\infty} \frac1y = 0.
\end{align*}
Thus, if $h/\alpha\in\left(0,1/\sqrt{\pi}\right)$ there is a unique $\sigma>0$ satisfying \eqref{ESing}.
 See Figure \ref{FWW}.
\begin{figure}[htb]
\centering
\includegraphics[width=5cm]{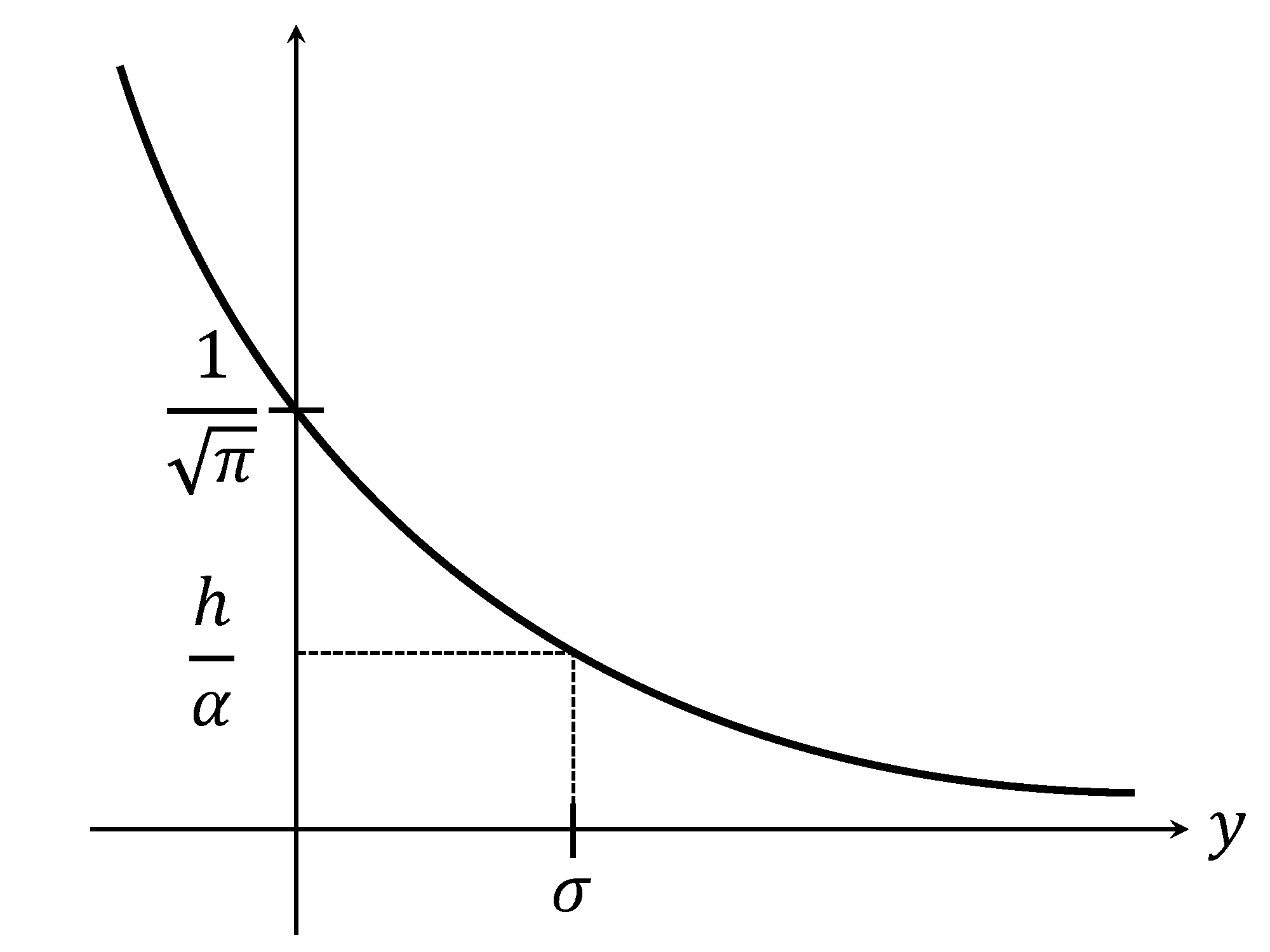}
\caption{the graph of $-W_1/(2W_1)'$} \label{FWW}
\end{figure}
We next consider the case $h/\alpha>1/\sqrt{\pi}$.
 By \eqref{EAF} and the formula for $W'_1$, we see that
\begin{equation*}
	W_1/W'_1 = \frac{e^{-y^2/4} - \frac{y}{2} \int_y^\infty e^{-y^2/4}\, d\zeta}{-\frac12\int_y^\infty e^{-\zeta^2/4}\, d\zeta}
	\to -\infty \quad\text{as}\quad y \to -\infty.
\end{equation*}
Since $W_1/W'_1$ is strictly increasing for $y<0$, we also observe that there is unique $\sigma<0$ satisfying \eqref{ESing} if $h/\alpha>1/\sqrt{\pi}$.
 If $h/\alpha=1/\sqrt{\pi}$, $\sigma$ must be zero.
 The proof for Lemma \ref{LUB} is now complete.
\end{proof}

\section*{Acknowledgements}

The work of the first author was partially supported by JSPS Grant-in-Aid for Early-Career Scientists JP22K13948. 
The work of the second author was partly supported by JSPS KAKENHI Grant Numbers JP19H00639, JP20K20342, JP24K00531 and JP24H00183 and by Arithmer Inc., Daikin Industries, Ltd.\ and Ebara Corporation through collaborative grants. 
The work of the third author was partially supported by JSPS KAKENHI Grant Number JP20K14350.

\end{document}